\renewcommand{\phi}{\varphi}
\DeclareMathOperator{\Arg}{Arg}
\DeclareMathOperator{\Impart}{Im}
\DeclareMathOperator{\Repart}{Re}
\renewcommand{\Im}{\Impart}
\renewcommand{\Re}{\Repart}
\newcommand{\C}{{\mathbb C}}
\newcommand{\N}{{\mathbb N}}
\newcommand{\R}{{\mathbb R}}
\newcommand{\Z}{{\mathbb Z}}
\newcommand{\cA}{\mathcal A}
\newcommand{\cAk}{\cA_k}
\newcommand{\cAl}{\cA_\ell}
\newcommand{\cAthree}{\cA_3}
\newcommand{\cAfour}{\cA_4}
\newcommand{\cE}{\mathcal E}
\newcommand{\cP}{\mathcal P}
\newcommand{\cV}{\mathcal V}
\newcommand{\cW}{\mathcal W}
\newcommand{\cX}{\mathcal X}
\newcommand{\cY}{\mathcal Y}
\newcommand{\SPV}{S_\cP(V)}
\newcommand{\SPVn}{S_\cP(V_n)}
\newcommand{\SPn}{S_\cP(n)}
\newcommand{\SAkV}{S_{\cAk}(V)}
\newcommand{\SAkn}{S_{\cAk}(n)}
\newcommand{\SAln}{S_{\cAl}(n)}
\newcommand{\SAthreen}{S_{\cAthree}(n)}
\newcommand{\SAfourn}{S_{\cAfour}(n)}
\newcommand{\SEV}{S_\cE(V)}
\newcommand{\SEn}{S_\cE(n)}
\newcommand{\card}[1]{\left|{#1}\right|}
\newcommand{\conj}[1]{\overline{#1}}
\newcommand{\maxs}[1]{\max_{\substack{#1}}}
\newcommand{\floor}[1]{\lfloor{#1}\rfloor}
\newcommand{\ceil}[1]{\lceil{#1}\rceil}
\newtheorem{theorem}{Theorem}[section]
\newtheorem{proposition}[theorem]{Proposition}
\newtheorem{lemma}[theorem]{Lemma}
\newtheorem{construction}[theorem]{Construction}
\theoremstyle{remark}
\newtheorem{remark}[theorem]{Remark}
\newtheorem{example}[theorem]{Example}
\title{On the Number of Similar Instances of a Pattern in a Finite Set}
\author[\'Abrego]{Bernardo \'Abrego}
\address{Department of Mathematics, California State University, Northridge, \: United States}
\author[Fern\'andez-Merchant]{Silvia Fern\'andez-Merchant}
\author[Katz]{Daniel J.~Katz}
\author[Kolesnikov]{Levon Kolesnikov}
\date{20 November 2016}
\begin{document}
\begin{abstract}
New bounds on the number of similar or directly similar copies of a pattern within a finite subset of the line or the plane are proved.
The number of equilateral triangles whose vertices all lie within an $n$-point subset of the plane is shown to be no more than $\lfloor{(4 n-1)(n-1)/18}\rfloor$.
The number of $k$-term arithmetic progressions that lie within an $n$-point subset of the line is shown to be at most $(n-r)(n+r-k+1)/(2 k-2)$, where $r$ is the remainder when $n$ is divided by $k-1$.
This upper bound is achieved when the $n$ points themselves form an arithmetic progression, but for some values of $k$ and $n$, it can also be achieved for other configurations of the $n$ points, and a full classification of such optimal configurations is given.
These results are achieved using a new general method based on ordering relations.
\end{abstract}
\maketitle
\section{Introduction}
Erd\H os and Purdy \cite{Erdos-Purdy-1971,Erdos-Purdy-1975,Erdos-Purdy-1976} raised the question of finding the maximum number of equilateral triangles that can be determined by $n$ points in the plane, where we say that a triangle is determined by a set $V$ when the three vertices of the triangle lie in $V$.
This problem is also mentioned in the compendia of unsolved problems in geometry by Croft, Falconer, and Guy \cite{Croft-Falconer-Guy} and by Brass, Moser, and Pach \cite{Brass-Moser-Pach}, and is discussed by Pach and Sharir in \cite{Pach-Sharir-2009}.
There are many other variations of this problem involving other patterns, constraints on the $n$-set in which instances of the pattern are sought, higher-dimensional ambient spaces, different definitions for what counts as an instance of a pattern, and different optimization objectives, e.g., \cite{Abrego-Elekes-Fernandez,Abrego-Fernandez-2002-Convex,Abrego-Fernandez-Llano,Agarwal-Apfelbaum-Purdy-Sharir,Apfelbaum-Sharir-2005,Brass-2000,Burton-Purdy,Cortier-Goaoc-Lee-Na,Pach-Pinchasi,Pach-Sharir-1992,vanKreveld-deBerg,Xu-Ding}.
These problems trace their inspiration to Erd\H os' question \cite{Erdos-1946} about the maximum number of pairs of points at unit distance that can be determined by an $n$-subset of the plane, and other related questions, which have led to a rich literature (see \cite{Brass-Moser-Pach,Pach-Agarwal} for an overview).
Apart from being a well-known and important question in discrete geometry, the problem of determining the number of instances of a pattern in an $n$-subset of the plane is relevant to the problem of pattern recognition in data from scanners, cameras, and telescopes \cite{Brass-2002,Brass-Moser-Pach,Brass-Pach}.

In all these problems, we have a {\it universe}, which is a set $U$, an equivalence relation $\sim$ on the power set of $U$, and a {\it pattern} $\cP$, which is an equivalence class of $\sim$.
If $P \in \cP$, we say that $P$ is an {\it instance} of pattern $\cP$.
We call $\cP$ a $k$-pattern if all its instances are $k$-sets.
For the rest of this paper, instances of the pattern $\cP$ are always assumed to be finite sets.
For $V \subseteq U$, we let 
\[
\SPV=\card{\{P \subseteq V: P \in \cP\}}.
\]
We are concerned with finite subsets $V$ of our universe $U$.
For an integer $n$, we define 
\[
\SPn=\maxs{V \subseteq U \\ \card{V}=n} \SPV,
\]
that is, the largest number of instances of $\cP$ that can be found in an $n$-subset of $U$.

The results of this paper concern the case when $U$ is the line $\R$ or the plane $\R^2$ (identified with $\C$), where $\sim$ is the geometric relation of similarity or direct similarity,\footnote{Two figures are said to be {\it directly similar} if one can be obtained from the other by a rotation and a translation.  Reflections are not allowed.} and where the patterns $\cP$ are classes of finite subsets, such as arithmetic progressions in the line, or equilateral triangles in the plane.

If our relation $\sim$ is similarity or direct similarity, and the instances of our pattern $\cP$ have more than one point, and we are counting the number of instances of $\cP$ in subsets of the plane, then it is not hard to show that $\SPn$ is $O(n^2)$.
Elekes and Erd\H os \cite{Elekes-Erdos} proved a subquadratic lower bound for general patterns $\cP$ and a quadratic lower bound if there is an instance of $\cP$ whose points all have algebraic numbers for their coordinates.
Laczkovich and Ruzsa \cite{Laczkovich-Ruzsa} later showed that $\SPn=\Theta(n^2)$ if and only if the cross ratio of every quadruplet of distinct points in an instance of $\cP$ is algebraic.
However, before this work there were no patterns $\cP$ (with at least three points) for which the quadratic coefficient was known.

Let $\cE$ be the pattern of the vertices of an equilateral triangle, that is, $\cE$ contains all $3$-point subsets of the plane such that all three pairs of points within the set have the same distance.
Then the results of Laczkovich and Ruzsa show that $\SEn=\Theta(n^2)$.
Prior to this paper, the best known bounds \cite{Abrego-Fernandez-2000} for $\SEn$ were 
\begin{equation}\label{William}
\left(\frac{1}{3}-\frac{\sqrt{3}}{4 \pi}\right) n^2 + O(n^{3/2}) \leq \SEn \leq \left\lfloor\frac{(n-1)^2}{4}\right\rfloor,
\end{equation}
so that 
\[
0.1955 < \frac{1}{3}-\frac{\sqrt{3}}{4 \pi} \leq \liminf_{n\to\infty} \frac{\SEn}{n^2} \leq \limsup_{n\to\infty} \frac{\SEn}{n^2} \leq \frac{1}{4}.
\]
The lower bound is obtained from the points in the equilateral triangle lattice
contained in a disk of a suitable radius.
It is conjectured \cite[Conjecture 1]{Abrego-Fernandez-2000} that this construction is asymptotically optimal and that $\lim_{n\to\infty} \SEn/n^2$ exists and equals $\frac{1}{3}-\frac{\sqrt{3}}{4 \pi}$, the lower endpoint of the interval in which the limits inferior and superior for $\SEn/n^2$ are known to lie.

The main result of this paper is an improved upper bound on $\SEn$.
\begin{theorem}\label{Katherine}
If $\cE$ is the pattern of the vertices of equilateral triangles in $\R^2$, then $\SEn \leq \floor{(4 n-1)(n-1)/18}$.
\end{theorem}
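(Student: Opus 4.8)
The plan is to reformulate the count using sixth roots of unity, fix a generic linear order on the point set, read off the combinatorial structure this order forces on each equilateral triangle, and then double-count over the points of $V$.

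First I would set up the reformulation. Identifying $\R^2$ with $\C$ and writing $\zeta=e^{i\pi/3}$, three distinct points $a,b,c$ are the vertices of an equilateral triangle if and only if $c=a+\zeta(b-a)$ for exactly one of the three choices of the ``apex'' $a$; hence $3\,\SEV=\card{\{(a,b)\in V^2:a\neq b,\ a+\zeta(b-a)\in V\}}$ for every finite $V$. Next, choose a direction so that the orthogonal projections of the points of $V$ onto it are pairwise distinct --- only finitely many directions are excluded --- giving a total order $v_1<\dots<v_n$. For an equilateral triangle $\{p,q,r\}$ with $p<q<r$ I would check, by taking $a=q$ in the identity above, that the middle vertex $q$ is the center of the rotation by $\pm60^\circ$ carrying $p$ to $r$. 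It follows that, as seen from $q$, the vertex $p$ lies in a double cone $\mathcal C^-_q$ of total angular width $120^\circ$ (two opposite $60^\circ$ sectors, disjoint from the ``horizontal'' directions near the order direction) and $r$ lies in the rotated double cone $\mathcal C^+_q$, which is disjoint from $\mathcal C^-_q$; thus all of the triangle's data at $q$ is confined to $4$ of the $6$ sectors of width $60^\circ$ around $q$.

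With this in hand I would double-count by middle vertex. Let $M(v)$ be the number of equilateral triangles of $V$ with $v$ as middle vertex, so that $\SEV=\sum_{v\in V}M(v)$. By the cone description, the triangles with middle vertex $v=v_i$ correspond to points $p\in V\cap\mathcal C^-_v$ whose rotate about $v$ also lies in $V$; since $p$ and its rotated partner determine one another, these triangles form a partial matching between $\{v_1,\dots,v_{i-1}\}$ and $\{v_{i+1},\dots,v_n\}$, whence $M(v_i)\le\min(i-1,n-i)$ and $\SEV\le\sum_{i=1}^n\min(i-1,n-i)=\floor{(n-1)^2/4}$. This recovers the classical bound of~\eqref{William}; the task is to sharpen it. The improvement I would aim for is a cap: no point of $V$ can be the middle vertex of appreciably more than $n/3$ equilateral triangles. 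The mechanism should be the $60^\circ$-cone structure --- the ``source'' points $p$ and the ``target'' points $r$ for triangles with middle vertex $v$ occupy two disjoint double cones, so at most four of the six sectors around $v$ are used and only two rotational matchings (one per opposite pair of sectors) contribute --- combined with the order, possibly together with optimizing (or averaging) over the choice of order direction. Summing such a capped estimate for $M(v_i)$ against $\min(i-1,n-i)$ should yield exactly $(4n-1)(n-1)/6$ for $3\,\SEV$.

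The hard part is precisely this last step: converting the angular ``only four of six sectors'' observation into a numerical bound on $M(v_i)$ strong enough to beat $\min(i-1,n-i)$ for the middle range of $i$, yet sharp enough to produce the exact linear term $-5n/18$ and not merely the leading coefficient $2/9$. I expect this to need either a global linear-programming-type combination of the per-vertex inequalities or a direct weighting of the triangles, plus some care with the boundary (small and large $i$) cases. Granting such a bound, the theorem follows: the chain of inequalities gives $\SEV\le(4n-1)(n-1)/18$ for every $n$-point set $V$, and since $\SEV$ is an integer we may pass to the floor, obtaining $\SEn\le\floor{(4n-1)(n-1)/18}$.
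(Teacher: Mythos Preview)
Your outline recovers the classical bound $\lfloor(n-1)^2/4\rfloor$ correctly, but the step you flag as ``the hard part'' is not just hard---the pointwise cap you are aiming for is false. Place $v$ at the origin, choose $m$ points $p_1,\dots,p_m$ with arguments in $(\pi/2,5\pi/6)$, and set $r_j=e^{-i\pi/3}p_j$; then each $r_j$ has argument in $(\pi/6,\pi/2)$, so under the horizontal order every $p_j$ lies left of $v$ and every $r_j$ lies right of $v$, and $v$ is the middle vertex of all $m$ triangles $\{p_j,v,r_j\}$. With $n=2m+1$ points this gives $M(v)=(n-1)/2$, not $n/3$. So no per-vertex argument of the shape ``$M(v_i)\le\min(i-1,n-i,cn)$ with $c<1/2$'' can work without additional global input, and the ``only four of six sectors'' observation by itself does not supply one: in the example all the action already sits in just two sectors.

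The paper's proof is structurally different and genuinely global. Rather than summing per-vertex bounds, it first invokes a topological fact (Lemma~\ref{halvingLines}) to find three halving lines of $V$ at mutual angles $2\pi/3$ meeting at a point, cutting $V$ into six compartments $V_1,\dots,V_6$ of total size $n$ or $n-1$. The savings come from \emph{intracompartmental pairs}: for any pair $(u,v)$ in a single compartment, exactly one of the three rotations aligning $L$, $M$, or $N$ with the $y$-axis places $\Arg(v-u)$ outside the admissible set $A$ of Lemma~\ref{Jacqueline}. An averaging step then picks the rotation for which at least one third of all intracompartmental pairs are unproductive, and these are subtracted from $\binom{\lceil n/2\rceil}{2}+\binom{\lfloor n/2\rfloor}{2}$; convexity forces $\sum_j\binom{|V_j|}{2}$ to be at least what one gets when the $|V_j|$ are as equal as possible, and the arithmetic yields $\lfloor(4n-1)(n-1)/18\rfloor$. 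The essential ingredients you are missing are thus (i) the concurrent-halving-lines lemma, which is what makes the six compartments simultaneously balanced, and (ii) the averaging over three rotations, which converts the sector observation into a guaranteed $1/3$ deduction. Neither of these is visible from a single ordering of $V$.
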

Therefore $\limsup_{n\to\infty} \SEn/n^2 \leq 2/9$.
The proof is given in Section \ref{Eric}, and is based on an order-theoretic methodology, developed in Sections \ref{Orestes}--\ref{Gerald}, for attacking pattern-counting problems.

Our new methodology was first developed while exploring the simpler problem of counting instances of finite patterns (such as arithmetic progressions) in a finite subset of the line $\R$, and was used to discover results that are presented in Sections \ref{Larry}--\ref{Rebecca}.
After settling preliminaries in Section \ref{Larry}, we calculate the precise maximum number of $k$-term arithmetic progressions that can occur in an $n$-point subset of the line in Section \ref{Christopher}.
The following is the main result of Section \ref{Christopher}.
\begin{theorem}\label{Thomas}
Let $k > 1$, and let $\cAk$ be the pattern of $k$-term arithmetic progressions on $\R$.
If $n \in \N$ and $r$ is the remainder when $n$ is divided by $k-1$, then $\SAkn= (n-r)(n+r-k+1)/(2 k-2)$.
\end{theorem}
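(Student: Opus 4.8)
The plan is to establish the two inequalities $\SAkn \ge (n-r)(n+r-k+1)/(2k-2)$ and $\SAkn \le (n-r)(n+r-k+1)/(2k-2)$ separately.

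For the lower bound it suffices to count the $k$-term progressions in the single set $V = \{1,2,\ldots,n\}$, since any $n$-term arithmetic progression works equally well ($\SAkV$ is a similarity invariant). Such a progression is determined by its first term $a$ and its common difference $d \in \{1,2,\ldots\}$, and it lies in $V$ exactly when $1 \le a \le n-(k-1)d$; hence the number with common difference $d$ is $\max\{n-(k-1)d,\,0\}$, so that
\[
\SAkV = \sum_{d=1}^{\floor{(n-1)/(k-1)}} \bigl(n - (k-1)d\bigr).
\]
Writing $n = (k-1)q + r$ with $0 \le r \le k-2$ and summing this arithmetic‐type series — handling $r=0$ and $r \ge 1$ separately, since $\floor{(n-1)/(k-1)}$ equals $q-1$ in the first case and $q$ in the second — one checks that both cases collapse to the closed form $(n-r)(n+r-k+1)/(2k-2)$. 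This is a routine computation, and it already shows the bound is attained.

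The real content is the upper bound $\SAkV \le (n-r)(n+r-k+1)/(2k-2)$ for an arbitrary $n$-point set $V = \{v_1 < v_2 < \cdots < v_n\} \subseteq \R$, and here I would deploy the order-theoretic framework developed in Section~\ref{Larry}. Record each $k$-term progression $P = \{a_1 < a_2 < \cdots < a_k\} \subseteq V$ by the set of ranks $\{i : v_i \in P\} \subseteq \{1,\ldots,n\}$ that it occupies; since $P$ has $k-2$ intermediate points, all lying in $V$, the largest and smallest of these ranks differ by at least $k-1$. The framework attaches to the instances an ordering relation under which progressions that share a common difference — more precisely, progressions lying in a single connected component of the \emph{difference graph} $G_d$ on $V$, with edge set $\{\{x,y\} : x,y\in V,\ |x-y| = d\}$ — form a chain, and from this chain structure one extracts a family of linear constraints on how many progressions can have their extreme ranks inside any prescribed block of indices. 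The plan is then to show that, subject to these constraints, $\SAkV$ is largest precisely when $V$ is itself an arithmetic progression (and to identify any further equality cases, though only the value is needed for the theorem), in which case it equals the lower-bound quantity above.

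The \emph{main obstacle} is this extremal step. One must rule out the possibility that a cleverly spread-out non-progression $V$ packs in extra $k$-term progressions by exploiting many common differences whose progressions heavily overlap — the failure mode that defeats the naive strategy of charging each progression to a single one of its points, since such a point can host arbitrarily many progressions (of geometrically growing common difference) in a set of few points. Concretely, I expect to encode the count as $\SAkV = \sum_d\bigl(\text{number of length-}k\text{ runs in }G_d\bigr)$, observe that the edge sets of the graphs $G_d$ partition the $\binom{n}{2}$ pairs of $V$, and then use the ordering relation to bound the interaction across distinct values of $d$ tightly enough to match the value attained by $\{1,\ldots,n\}$. Combining this upper bound with the lower bound of the first paragraph yields $\SAkn = (n-r)(n+r-k+1)/(2k-2)$.
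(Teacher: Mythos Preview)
Your lower bound is correct and is essentially the paper's Lemma~\ref{Eustace}.

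The upper bound, however, is not established. What you present is a programme involving difference graphs $G_d$, chain structures along components, and an ``extremal step'' that you yourself flag as the main unresolved obstacle. No actual inequality for an arbitrary $n$-set $V$ is proved; there is only the hope that the constraints you would extract are tight enough to force the arithmetic-progression value. In particular, the sentence ``use the ordering relation to bound the interaction across distinct values of $d$ tightly enough to match the value attained by $\{1,\ldots,n\}$'' is exactly where a proof is needed and none is given.

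The paper's upper bound (Theorem~\ref{Beatrice}, specialized in Theorem~\ref{Deborah}) bypasses all of this. Take a balanced $\leq$-orderly $(k-1)$-decomposition $\{V_1 < V_2 < \cdots < V_{k-1}\}$ of $V$. By the pigeonhole-type Lemma~\ref{Alice}, every $k$-term progression $P \subseteq V$ has, for some $j$, its $j$th and $(j+1)$th points both lying in $V_j$. Since an arithmetic progression is uniquely determined by any two of its consecutive terms, at most $\binom{|V_j|}{2}$ progressions can be of echelon $j$. Summing over $j$ with $|V_j|\in\{q,q+1\}$ gives exactly $(n-r)(n+r-k+1)/(2k-2)$. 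There is no extremal comparison with $\{1,\ldots,n\}$ and no analysis across common differences; unique reconstructibility from consecutive points does all the work. Your rank observation (that the extreme ranks of $P$ differ by at least $k-1$) is the shadow of this argument but is too coarse on its own --- the echelon conclusion of Lemma~\ref{Alice}, which pins down \emph{which} two consecutive points fall in the same block, is the missing ingredient.
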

For each $k > 2$, we completely classify all $n$-point subsets of the line that contain as many $k$-term arithmetic progressions as possible.
These optimal sets include $n$-term arithmetic progressions, but there are some other optimal sets: when $k > 3$ and $k-1$ divides $n$, sets obtained by deleting the second or penultimate point from an arithmetic progression are also optimal (see Proposition \ref{Oliver}).
When $k=3$, the variety of the optimal sets is considerably richer (see Proposition \ref{Imogene}).
This work complements the asymptotic approach of Elekes \cite{Elekes}, who studied the structure of $n$-subsets of the line that contain an asymptotically large number of $k$-term arithmetic progressions, whereas we determine the precise maximum and those sets that achieve it.

In Section \ref{Rebecca}, we show that our method can be used to obtain both upper and lower bounds on $\SPn$ when $\cP$ is a commensurable pattern in the line, that is, where the ratios of the distances between points in $\cP$ are all rational.
We count the number of directly similar copies of $P=\{0,1,3\}$ in finite subsets of the line, and thereby show that if $\cP$ is the pattern for $P$, then
\begin{equation}\label{Howard}
\frac{1}{6} \leq \liminf_{n\to\infty} \frac{\SPn}{n^2} \leq \limsup_{n\to\infty} \frac{\SPn}{n^2} \leq \frac{1}{4}.
\end{equation}
We then construct a family of $n$-point subsets $V_n$ of the line that has $\lim_{n\to\infty} \SPVn/n^2 = 3/16$, which is the highest known value and, interestingly, lies strictly between the bounds of \eqref{Howard}.

As noted, our order-theoretic method is a new tool for counting patterns in finite sets.
It combines nicely with other techniques of discrete geometry, for example, the proof of Theorem \ref{Katherine} combines our tools with the topological technique of finding three concurrent halving lines of a set (cf.~\cite[Lemma 2]{Fekete-Meijer}, \cite[Lemma 3]{Erickson-Hurtado-Morin}, and \cite[Lemma 2]{Dumitrescu-Pach-Toth}).
Further applications will be explored in future works.

\section{Order and Decompositions}\label{Orestes}

A key theme in this work is that one can place an order relation on the universe $U$, and use it to create bounds on $\SPn$.
So henceforth, we assume that our universe $U$ has a total ordering relation $\preceq$.
If $V \subseteq U$, then we say that a point $v \in V$ is the {\it $i$th point of $V$ with respect to $\preceq$} to mean that there are precisely $i-1$ points $w \in V$ with $w \prec v$.
If $V, W \subseteq U$, then we write $V \prec W$ to mean that $v \prec w$ for every $v \in V$ and $w \in W$.

If $V$ is a set and $\ell$ a positive integer, then an {\it $\ell$-decomposition} of $V$ is a set $\{V_1,\ldots,V_\ell\}$ of disjoint subsets of $V$ such that $\bigcup_{j=1}^\ell V_j=V$.  (So a decomposition differs from a partition only inasmuch as the former may include empty subsets.)
An $\ell$-decomposition of a finite set $V$ with $n$ points is said to be {\it balanced} if each set in the decomposition has either $\floor{n/\ell}$ or $\ceil{n/\ell}$ points in it.
An $\ell$-decomposition $\cV$ of $V$ is called {\it $\preceq$-orderly} if, whenever $V, V^\prime$ are distinct elements of $\cV$, then either $V\prec V^\prime$ or $V^\prime \prec V$.
If $\cV$ is an orderly $\ell$-decomposition of $V$, then we say that $V \in \cV$ is the $i$th set in $\cV$ to mean that there are precisely $i-1$ sets $W \in \cV$ with $W \prec V$.

If $k \geq 2$ and there is a $\preceq$-orderly $(k-1)$-decomposition $\cV$ of $V$, and a $k$-subset $P$ of $V$, then we say that $P$ is of {\it echelon $j$ in $\cV$} if the $j$th and $(j+1)$th points of $P$ lie in the $j$th set of $\cV$.
That is, if $\cV=\{V_1 \prec \cdots \prec V_{k-1}\}$ and $P=\{p_1 \prec \cdots \prec p_k\}$, then $P$ is of echelon $j$ in $\cV$ if $p_j, p_{j+1} \in V_j$.
It is a key fact that $P$ must be of echelon $j$ in $\cV$ for at least one $j$.
\begin{lemma}\label{Alice}
Let $k\geq 2$ and let $V$ be a set with a $\preceq$-orderly $(k-1)$-decomposition $\cV$.
If $P$ is a $k$-subset of $V$, then $P$ is of echelon $j$ in $\cV$ for some $j \in \{1,\ldots,k-1\}$.
\end{lemma}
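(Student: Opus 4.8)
The plan is a short counting argument governing how the $k$ points of $P$ can be spread across the $k-1$ sets of $\cV$. Since $\cV$ is $\preceq$-orderly, its sets are linearly ordered, so write $\cV = \{V_1 \prec \cdots \prec V_{k-1}\}$ with $V_i$ the $i$th set, and write $P = \{p_1 \prec \cdots \prec p_k\}$. First I would record the block structure that orderliness forces on $P$: if $p_a \in V_i$, $p_b \in V_{i'}$, and $a < b$, then $i \le i'$, for otherwise $V_{i'} \prec V_i$ would give $p_b \prec p_a$, contradicting $p_a \prec p_b$. Hence, setting $c_i := \card{P \cap (V_1 \cup \cdots \cup V_i)}$ for $0 \le i \le k-1$, the set $V_i$ meets $P$ in exactly the consecutive block $P \cap V_i = \{p_{c_{i-1}+1}, \ldots, p_{c_i}\}$, and in particular $c_0 = 0$ and $c_{k-1} = k$.

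With this in hand, the second step is to pick out the echelon index. By definition $P$ is of echelon $j$ precisely when $p_j, p_{j+1} \in V_j$, which by the block description is equivalent to $c_{j-1} \le j-1$ and $c_j \ge j+1$, i.e.\ to $c_{j-1} - (j-1) \le 0$ and $c_j - j \ge 1$. Now consider the integer sequence $c_i - i$ for $0 \le i \le k-1$: it starts at $c_0 - 0 = 0$ and ends at $c_{k-1} - (k-1) = 1$. Let $j$ be the least index with $c_j - j \ge 1$; then $1 \le j \le k-1$ since $c_0 - 0 = 0 < 1$, and minimality together with integrality forces $c_{j-1} - (j-1) \le 0$. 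This $j$ is an echelon index for $P$, which completes the argument.

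I do not expect a real obstacle here; the only place that needs any care is the first step, where one must extract from the definition of $\preceq$-orderly the fact that $P \cap V_i$ is a block of consecutive points of $P$ and that these blocks occur in the order of the $V_i$'s. It is worth emphasizing why a naive pigeonhole --- ``$k$ points in $k-1$ sets forces two of them into some $V_i$'' --- does not suffice: that $V_i$ need not have index equal to the position in $P$ of the two points it captures, whereas the minimal-index choice of $j$ above is exactly what aligns the set index with the point positions. (An empty set in $\cV$, if present, merely contributes an index $i$ with $c_i = c_{i-1}$ and is never chosen as $j$, so it is harmless.)
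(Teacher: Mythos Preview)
Your argument is correct, and it differs from the paper's. The paper proves the lemma by a short induction on $k$: if $P$ is not of echelon $k-1$, then $p_{k-1}\notin V_{k-1}$ (since $p_k\succ p_{k-1}$ forces $p_k\in V_{k-1}$ whenever $p_{k-1}\in V_{k-1}$), so $\{p_1,\ldots,p_{k-1}\}$ sits inside $V_1\cup\cdots\cup V_{k-2}$ and the induction hypothesis furnishes an echelon index $j\le k-2$. Your route instead tracks the cumulative counts $c_i=\card{P\cap(V_1\cup\cdots\cup V_i)}$ and picks the first index where $c_j-j\ge 1$; this is a direct, non-inductive argument that in fact locates the \emph{smallest} echelon index. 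Both proofs are short and rest on the same underlying block structure you isolate in your first step; the paper's induction is terser, while your version is more explicit and makes transparent why ordinary pigeonhole alone does not suffice---the point you rightly flag in your last paragraph.
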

\begin{proof}
By induction on $k$, with the $k=2$ case trivial.
For $k > 2$, suppose that $P=\{p_1 \prec \cdots \prec p_k\}$ is not of echelon $(k-1)$ in $\cV=\{V_1 \prec \cdots \prec V_{k-1}\}$.
Then $p_{k-1} \not \in V_{k-1}$, and thus $P\smallsetminus\{p_k\}$ is a $(k-1)$-subset of $V\smallsetminus V_{k-1}$ with $\preceq$-orderly $(k-2)$-decomposition $\{V_1,\ldots,V_{k-2}\}$, and hence there is some $j \in \{1,\ldots,k-2\}$ such that $p_j,p_{j+1} \in V_j$ by induction.
\end{proof}
\begin{remark}
It should be noted that Lemma \ref{Alice} is somewhat more detailed in its conclusions than the usual Pigeonhole Principle.  For example, suppose that our universe $U$ is $\R$ and $\preceq$ is the usual ordering $\leq$ of real numbers.  Let $V=P=\{p_1 < \ldots < p_6\}$, and let $\cV=\{V_1,\ldots,V_5\}$ be the orderly $5$-decomposition of $V$ shown in Figure \ref{Aaron}.
\FloatBarrier
\begin{center}
\begin{figure}
\begin{center}
\begin{tikzpicture}[scale=0.85]
\fill (1,0.5) circle [radius=2pt];
\fill (3.5,0.5) circle [radius=2pt];
\fill (4.5,0.5) circle [radius=2pt];
\fill (7,0.5) circle [radius=2pt];
\fill (8,0.5) circle [radius=2pt];
\fill (10.5,0.5) circle [radius=2pt];
\node (p1) at (1,0) {$p_1$};
\node (p2) at (3.5,0) {$p_2$};
\node (p3) at (4.5,0) {$p_3$};
\node (p4) at (7,0) {$p_4$};
\node (p5) at (8,0) {$p_5$};
\node (p6) at (10.5,0) {$p_6$};
\draw (-1.5,0.25) ellipse (1.0 and 0.6);
\draw (1,0.25) ellipse (1.0 and 0.6);
\draw (4,0.25) ellipse (1.5 and 0.6);
\draw (7.5,0.25) ellipse (1.5 and 0.6);
\draw (10.5,0.25) ellipse (1.0 and 0.6);
\node (V1) at (-1.5,-0.75) {$V_1$};
\node (V2) at (1.0,-0.75) {$V_2$};
\node (V3) at (4.0,-0.75) {$V_3$};
\node (V4) at (7.5,-0.75) {$V_4$};
\node (V5) at (10.5,-0.75) {$V_5$};
\end{tikzpicture}
\caption{Illustration of Lemma \ref{Alice}}\label{Aaron}
\end{center}
\end{figure}
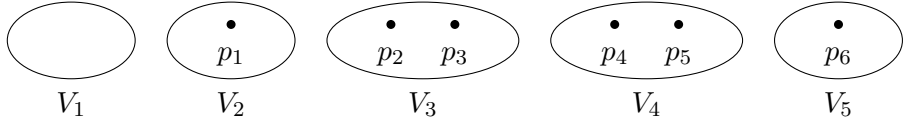
\end{center}
Since we have six points in five sets, the usual Pigeonhole Principle shows that there is at least one set in $\cV$ containing at least two points of $P$; this is seen in $V_3$, which contains $p_2$ and $p_3$.  But $P=\{p_1 < \ldots < p_6\}$ (which is the same as $V$) is not of echelon $i=3$ in $V$, because although $V_3$ contains two points of $P$, it does not contain the particular points $p_i=p_3$ and $p_{i+1}=p_4$.  On the other hand, Lemma \ref{Alice} asserts that there is some $j \in \{1,\ldots,5\}$ such that $P$ is of echelon $j$ in $V$, and we see $P$ is indeed of echelon $j=4$ since points $p_j=p_4$ and $p_{j+1}=p_5$ lie in $V_4$.

Of course, if $k \geq 4$, and $V$ is a set with $\preceq$-orderly $(k-1)$-decomposition $\cV$, then a $k$-subset $P$ of $V$ may be of echelon $j$ in $\cV$ for more than one $j$.
\end{remark}
\section{Reconstructibility}

Let $\cP$ be a $k$-pattern in our universe $U$ with order relation $\preceq$.
Suppose we are given some $j \in \{1,\ldots,k-1\}$ and two points $u, v \in U$ with $u\prec v$.
Any $P \in \cP$ that has $u$ and $v$ respectively as its $j$th and $(j+1)$th points is called a {\it reconstruction of $\cP$} with the prescribed points as its $j$th and $(j+1)$th points.

If, for every $j \in \{1,\ldots,k-1\}$ and $u \prec v \in U$, there is at least one reconstruction of $\cP$ with $u$ and $v$ as $j$th and $(j+1)$th points, then we say that $\cP$ {\it admits at least one reconstruction from $\preceq$-consecutive points}.
If there is at most one reconstruction for every  $j \in \{1,\ldots,k-1\}$ and $u \prec v \in U$, we say that $\cP$ {\it admits at most one reconstruction from $\preceq$-consecutive points}.
If both of these hold, then we say that $\cP$ {\it admits a unique reconstruction from $\preceq$-consecutive points}, or {\it is uniquely reconstructible from $\preceq$-consecutive points}.

\begin{example}
Suppose that our pattern $\cP$ is the set of all triples that are vertices of isosceles right triangles, and suppose that our order relation $\preceq$ is lexicographic ordering of $\R^2$.  (That is, $(a,b) \prec (c,d)$ if and only if either (i) $a < b$ or (ii) $a=b$ and $c < d$.)  We see that a pair of points $A$ and $B$ may be the first and second points (under the relation $\preceq$) of more than one instance of $\cP$.  For instance, if $A$ and $B$ are the points in Figure \ref{Bartholomew} below, there are three isosceles right triangles of which $A$ and $B$ are the leftmost two vertices, namely, $\bigtriangleup A B C_1$, $\bigtriangleup A B C_2$, and $\bigtriangleup A B C_3$.  (There are also three other isosceles right triangles, $\bigtriangleup A B C_4$, $\bigtriangleup A B C_5$, and $\bigtriangleup A B C_6$, with $A$ and $B$ as vertices, but $A$ and $B$ are not the leftmost two vertices in these.)  If one looks at other pairs $A$ and $B$, one can find out that $\cP$ always admits at least one reconstruction from $\preceq$-consecutive points, but often admits more than one reconstruction.
\begin{center}
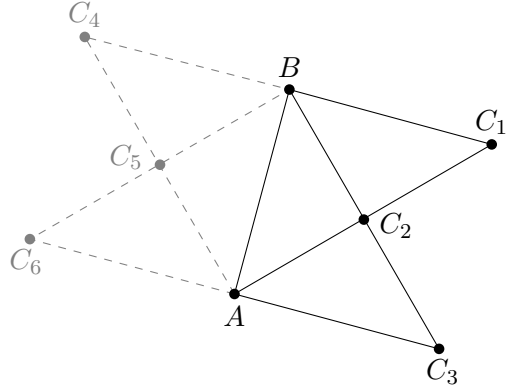
\begin{figure}
\begin{center}
\begin{tikzpicture}
\node (A) at  ( 0   , 0   ) {};
\node (B) at  ( 0.73, 2.72) {};
\node (C1) at ( 3.42, 1.99) {};
\node (C2) at ( 1.72, 0.99) {};
\node (C3) at ( 2.72,-0.73) {};
\node (C4) at (-1.99, 3.42) {};
\node (C5) at (-0.99, 1.72) {};
\node (C6) at (-2.72, 0.73) {};
\draw (0,0)--(0.73,2.72);
\draw (0,0)--(3.42,1.99)--(0.73,2.72);
\draw (0,0)--(2.72,-0.73)--(0.73,2.72);
\draw [gray,dashed] (0,0)--(-1.99,3.42)--(0.73,2.72);
\draw [gray,dashed] (0,0)--(-2.72,0.73)--(0.73,2.72);
\fill (A) circle [radius=2pt];
\fill (B) circle [radius=2pt];
\fill (C1) circle [radius=2pt];
\fill (C2) circle [radius=2pt];
\fill (C3) circle [radius=2pt];
\fill (C4) [gray] circle [radius=2pt];
\fill (C5) [gray] circle [radius=2pt];
\fill (C6) [gray] circle [radius=2pt];
\node (AL) at (0,-0.3) {$A$};
\node (BL) at (0.73,3.02) {$B$};
\node (C1L) at (3.42,2.29) {$C_1$};
\node (C2L) at (2.15,0.91) {$C_2$};
\node (C3L) at (2.77,-1.03) {$C_3$};
\node (C4L) at (-1.99, 3.72) [gray] {$C_4$};
\node (C5L) at (-1.44, 1.82) [gray] {$C_5$};
\node (C6L) at (-2.77, 0.43) [gray] {$C_6$};
\end{tikzpicture}
\caption{Points $A$ and $B$ are vertices in six isosceles right triangles, three of which are reconstructions that make $A$ and $B$ the leftmost two vertices.}\label{Bartholomew}
\end{center}
\end{figure}
\end{center}
\end{example}
\FloatBarrier
\begin{example}
Suppose that our pattern $\cP$ is the set of all triples that are vertices of equilateral triangles, and suppose our order relation $\preceq$ is lexicographic ordering of $\R^2$.  We see that a pair of points $A$ and $B$ might not be the second and third points (under the relation $\preceq$) of any instance of $\cP$.  For instance, if $A$ and $B$ are the points in Figure \ref{Caesar} below, there is no equilateral triangle of which $A$ and $B$ are the rightmost two vertices.  (There are two equilateral triangles, $\bigtriangleup A B C_1$ and $\bigtriangleup A B C_2$, with $A$ and $B$ as vertices, but $A$ and $B$ are not the rightmost two vertices in these.)  If one looks at other pairs $A$ and $B$, one can find out that $\cP$ never admits more than one reconstruction from $\preceq$-consecutive points: it sometimes admits no reconstruction, and sometimes admits one reconstruction.  In Lemma \ref{Jacqueline}, we determine precisely when these two cases occur.
\begin{center}
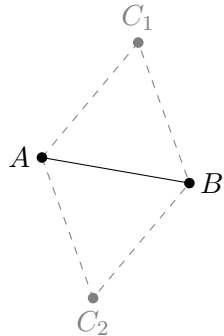
\begin{figure}
\begin{center}
\begin{tikzpicture}
\node (A) at  (-0.98, 0.17) {};
\node (B) at  ( 0.98,-0.17) {};
\node (C1) at ( 0.30, 1.70) {};
\node (C2) at (-0.30,-1.70) {};
\draw (-0.98,0.17)--(0.98,-0.17);
\draw [gray,dashed] (-0.98,0.17)--(0.30,1.70) --(0.98,-0.17);
\draw [gray,dashed] (-0.98,0.17)--(-0.30,-1.70) --(0.98,-0.17);
\fill (A) circle [radius=2pt];
\fill (B) circle [radius=2pt];
\fill (C1) [gray] circle [radius=2pt];
\fill (C2) [gray] circle [radius=2pt];
\node (LA) at  (-1.28, 0.17) {$A$};
\node (LB) at  ( 1.28,-0.17) {$B$};
\node (LC1) at ( 0.30, 2.03) [gray] {$C_1$};
\node (LC2) at (-0.30,-2.03) [gray] {$C_2$};
\end{tikzpicture}
\caption{Points $A$ and $B$ are vertices in two equilateral triangles, but neither of these is a reconstruction that makes $A$ and $B$ the rightmost two vertices.}\label{Caesar}
\end{center}
\end{figure}
\end{center}
\end{example}
\begin{example}
Suppose that our pattern $\cP$ consists of all sets of points in $\R$ that are directly similar to $\{0,1,3,6\}$, and suppose that our order relation $\preceq$ is the usual ordering of $\R$.  A pair of points $A$ and $B$ with $A < B$ will always be the second and third points of precisely one instance of $\cP$, namely, $\{(3 A-B)/2,A,B,(5 B-3 A)/2\}$.  For example, if $A$ and $B$ are $-1$ and $5$, respectively, then the unique instance of $\cP$ of which $A$ and $B$ are the second and third points is $\{-4,-1,5,14\}$.
\end{example}
\section{A General Upper Bound on $\SPn$}\label{Gerald}

\begin{theorem}\label{Beatrice}
Let $k > 1$ and let $\cP$ be a $k$-pattern in $U$ that admits at most one reconstruction from $\preceq$-consecutive points.
If $n \in \N$ and $r$ is the remainder when $n$ is divided by $k-1$, then $\SPn \leq (n-r)(n+r-k+1)/(2 k-2)$.
\end{theorem}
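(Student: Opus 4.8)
\emph{Proof proposal.} The plan is to fix an arbitrary $n$-point set $V\subseteq U$, build one convenient $\preceq$-orderly $(k-1)$-decomposition of $V$, and bound $\SPV$ by counting instances of $\cP$ according to their echelon. Write $n=q(k-1)+r$ with $0\le r\le k-2$. Listing the points of $V$ in $\preceq$-increasing order and grouping consecutive points into blocks yields a $\preceq$-orderly $(k-1)$-decomposition $\cV=\{V_1\prec\cdots\prec V_{k-1}\}$, and by choosing the block sizes appropriately we may arrange that exactly $r$ of the $V_j$ have $q+1$ points and the remaining $k-1-r$ have $q$ points (a consecutive-block grouping is automatically orderly). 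Note we get to choose $\cV$ after $V$ is fixed, so there is no need to prove that balancing is optimal; we simply use this particular decomposition.

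Next, Lemma \ref{Alice} says every $P\in\cP$ with $P\subseteq V$ is of echelon $j$ in $\cV$ for at least one $j\in\{1,\ldots,k-1\}$, so $\SPV\le\sum_{j=1}^{k-1}N_j$, where $N_j$ is the number of $P\in\cP$ with $P\subseteq V$ that are of echelon $j$. The crucial observation is that if $P=\{p_1\prec\cdots\prec p_k\}$ is of echelon $j$, then $p_j$ and $p_{j+1}$ are precisely the $j$th and $(j+1)$th points of $P$ and both lie in the single block $V_j$; since $\cP$ admits at most one reconstruction from $\preceq$-consecutive points, $P$ is the \emph{unique} reconstruction of $\cP$ having $p_j,p_{j+1}$ as its $j$th and $(j+1)$th points. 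Hence the assignment $P\mapsto\{p_j,p_{j+1}\}$ is injective on echelon-$j$ instances, so $N_j\le\binom{\card{V_j}}{2}$.

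Summing these bounds over $j$ and using the block sizes gives $\SPV\le r\binom{q+1}{2}+(k-1-r)\binom{q}{2}$, and a routine computation—using $n-r=q(k-1)$ and $n+r-k+1=(k-1)(q-1)+2r$—rewrites the right-hand side as $(n-r)(n+r-k+1)/(2k-2)$. Since $V$ was an arbitrary $n$-point subset of $U$, this yields $\SPn\le(n-r)(n+r-k+1)/(2k-2)$; the degenerate case $q=0$ (that is, $n<k-1$) is covered automatically, the bound being $0$.

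I expect the only real content to reside in the second paragraph: recognizing that being ``of echelon $j$'' forces $p_j$ and $p_{j+1}$ into one block $V_j$ as consecutive points of $P$, which is exactly the configuration governed by the ``at most one reconstruction from $\preceq$-consecutive points'' hypothesis. Everything else—exhibiting a balanced orderly decomposition, the pigeonhole step via Lemma \ref{Alice}, and the closing arithmetic identification with the stated closed form—is bookkeeping.
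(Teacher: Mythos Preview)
Your proof is correct and follows essentially the same approach as the paper: take a balanced $\preceq$-orderly $(k-1)$-decomposition of an arbitrary $n$-set $V$, invoke Lemma~\ref{Alice} to assign each instance an echelon, use the ``at most one reconstruction'' hypothesis to bound the number of echelon-$j$ instances by $\binom{|V_j|}{2}$, and sum. The paper's proof is terser but otherwise identical in structure and content.
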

\begin{proof}
Let $V$ be an $n$-subset of $U$, and let $\cV=\{V_1 \prec \cdots \prec V_{k-1}\}$ be a $\preceq$-orderly, balanced $(k-1)$-decomposition of $V$.
By Lemma \ref{Alice}, each $P \subseteq V$ with $P \in \cP$ is of echelon $j$ in $\cV$ for some $j \in \{1,\ldots,k-1\}$.
Each subset of echelon $j$ has its $j$th and $(j+1)$st elements in $V_j$, and since no two instances of $\cP$ may have the same $j$th and $(j+1)$th points, this means that there are at most $\binom{\card{V_j}}{2}$ instances of $\cP$ that are of echelon $j$.
Thus there are at most $\sum_{j=0}^{k-1} \binom{\card{V_j}}{2}$ instances of $\cP$ in $V$.
If we write $n=q (k-1)+r$, then our decomposition, being balanced, has $\card{V_j}=q+1$ for $r$ values of $j$ and $\card{V_j}=q$ for $k-1-r$ values of $j$, so that $\SPV \leq r\binom{q+1}{2}+(k-1-r)\binom{q}{2}=(n-r)(n+r-k+1)/(2 k-2)$.
\end{proof}
The method of counting used in this proof provides a criterion for when this upper bound is achieved.
\begin{lemma}\label{Francis}
Let $k > 1$ and let $\cP$ be a $k$-pattern in $U$ that admits at most one reconstruction from $\preceq$-consecutive points.
Let $n \in \N$ and let $r$ be the remainder when $n$ is divided by $k-1$.
Let $V$ be an $n$-subset of $U$ with $\preceq$-orderly, balanced $(k-1)$-decomposition $\cV=\{V_1,\ldots,V_{k-1}\}$.
Then $\SPV=(n-r)(n+r-k+1)/(2 k-2)$ if and only if for each $j \in \{1,\ldots,k-1\}$ and every pair of distinct points $v, w \in V_j$, there exists a reconstruction $P$ of $\cP$ with $v$ and $w$ as $j$th and $(j+1)$th points, and $P$ is a subset of $V$ that is not of echelon $i$ in $\cV$ for any $i\not=j$.
\end{lemma}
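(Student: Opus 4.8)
The plan is to revisit the proof of Theorem~\ref{Beatrice} and to pin down exactly when each inequality in it becomes an equality. Fix the data of the statement, write $n=q(k-1)+r$ with $0\le r<k-1$, and for $j\in\{1,\ldots,k-1\}$ let $N_j$ be the number of $P\in\cP$ with $P\subseteq V$ that are of echelon $j$ in $\cV$. The estimate in Theorem~\ref{Beatrice} amounts to the two-step chain
\[
\SPV \;\le\; \sum_{j=1}^{k-1} N_j \;\le\; \sum_{j=1}^{k-1}\binom{\card{V_j}}{2} \;=\; \frac{(n-r)(n+r-k+1)}{2k-2},
\]
where the first inequality holds because, by Lemma~\ref{Alice}, each $P\subseteq V$ with $P\in\cP$ is of echelon $j$ for at least one $j$ and so is counted at least once in the middle sum; the second holds because $\cP$ admits at most one reconstruction from $\preceq$-consecutive points, so the map sending an echelon-$j$ instance in $V$ to the pair of its $j$th and $(j+1)$th points (a $2$-subset of $V_j$) is well defined and injective; and the final equality is the computation already carried out in Theorem~\ref{Beatrice}. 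Hence $\SPV$ equals the claimed value if and only if both inequalities are equalities, and the remaining work is to translate these two equalities into the condition of the statement.

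I would then treat the two equalities in turn. The first is an equality exactly when every $P\subseteq V$ with $P\in\cP$ is of echelon $j$ for \emph{precisely} one $j$: the left side counts each such $P$ once, the middle sum counts it once for every echelon it has, and by Lemma~\ref{Alice} that number is at least one, so the two sides agree if and only if it is always exactly one. The second is an equality exactly when $N_j=\binom{\card{V_j}}{2}$ for all $j$, i.e.\ when the injection above is onto, i.e.\ when for every $j$ and every pair of distinct $v\prec w$ in $V_j$ there is an echelon-$j$ instance $P\subseteq V$ having $v$ and $w$ as its $j$th and $(j+1)$th points. Such a $P$ is by definition a reconstruction of $\cP$ with $v,w$ as $j$th and $(j+1)$th points; conversely, any reconstruction that lies in $V$ and has its $j$th and $(j+1)$th points in $V_j$ is of echelon $j$ in $\cV$. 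So the second equality says precisely that for each $j$ and each pair of distinct points of $V_j$ the (necessarily unique) reconstruction with those points as its $j$th and $(j+1)$th points exists and is a subset of $V$.

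Finally I would assemble the equivalence. If $\SPV$ is the claimed value, both equalities hold: the second supplies, for each $j$ and each pair of distinct $v,w\in V_j$, a reconstruction $P\subseteq V$ with $v,w$ as $j$th and $(j+1)$th points, and the first tells us that $P$ has only one echelon, which must be $j$, so $P$ is not of echelon $i$ for any $i\ne j$; this is exactly the stated condition. For the converse, assume the stated condition. The second equality is then immediate, and for the first I would take an arbitrary $P\subseteq V$ with $P\in\cP$, use Lemma~\ref{Alice} to realize it as an echelon-$j$ instance for some $j$, observe that its $j$th and $(j+1)$th points $v\prec w$ lie in $V_j$ so that $P$ is a reconstruction with $v,w$ as $j$th and $(j+1)$th points, and then invoke the stated condition to obtain a reconstruction $P'\subseteq V$ with the same $j$, $v$, $w$ that is not of echelon $i$ for any $i\ne j$. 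Since $\cP$ admits at most one reconstruction from $\preceq$-consecutive points, $P'=P$, so $P$ has a unique echelon. Thus both equalities hold and $\SPV$ attains the claimed value.

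The one step requiring care is this converse: the hypothesis constrains only the ``prescribed'' reconstructions, one per pair per block of $\cV$, whereas the first equality concerns \emph{every} instance of $\cP$ inside $V$. The uniqueness-of-reconstruction hypothesis is exactly what bridges the gap, since it forces an arbitrary echelon-$j$ instance in $V$ to coincide with the prescribed reconstruction attached to its own $j$th and $(j+1)$th points. The remaining points are routine bookkeeping: ordering an unordered pair of $V_j$ to match the convention $u\prec v$ in the definition of reconstruction, and noting that a reconstruction contained in $V$ whose two distinguished points both lie in $V_j$ is automatically of echelon $j$.
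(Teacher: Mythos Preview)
Your proof is correct and follows essentially the same approach as the paper: both identify equality with the claimed value as equivalent to the two inequalities in the proof of Theorem~\ref{Beatrice} being tight, and translate this into the stated condition. Your treatment is in fact more thorough than the paper's brief sketch, since you spell out the converse direction carefully (using uniqueness of reconstruction to show that the hypothesis, which a priori constrains only the prescribed reconstructions, in fact forces every instance in $V$ to have a unique echelon), a point the paper leaves implicit.
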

\begin{proof}
Examining the proof of the preceding theorem, we see that if any of the reconstructions mentioned in the statement of this proposition did not exist, or did not lie in $V$, then $\binom{\card{V_j}}{2}$ would be an overestimate of the number of instances of $\cP$ that are of echelon $j$.
And if any instance were of two distinct echelons in $\cV$, then we would be counting it twice in $\sum_{j=0}^{k-1} \binom{\card{V_j}}{2}$, thus making our bound an overestimate.
\end{proof}

\section{General Patterns in the Line}\label{Larry}
In this section, our universe $U$ is the line $\R$, our equivalence relation $\sim$ is direct similarity, and our pattern $\cP$ can be the equivalence class for any finite subset of $\R$.
Our order relation is the usual order relation $\leq$ for $\R$.
We note that any pattern $\cP$ is uniquely reconstructible from $\leq$-consecutive points, so Theorem \ref{Beatrice} applies, amounting to the following.
\begin{theorem}\label{Deborah}
Let $k > 1$ and let $\cP$ be any $k$-pattern in $\R$.  
If $n \in \N$ and $r$ is the remainder when $n$ is divided by $k-1$, then $\SPn \leq (n-r)(n+r-k+1)/(2 k-2)$.
\end{theorem}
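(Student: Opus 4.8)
The plan is to derive Theorem~\ref{Deborah} as an immediate corollary of Theorem~\ref{Beatrice}. The only thing that needs checking is that the general hypothesis of Theorem~\ref{Beatrice} is satisfied in the present setting: namely, that every $k$-pattern $\cP$ in $\R$, with $U = \R$, $\sim$ direct similarity, and $\preceq$ the usual order $\leq$, admits at most one reconstruction from $\leq$-consecutive points. Once this is established, the bound $\SPn \leq (n-r)(n+r-k+1)/(2k-2)$ follows verbatim from Theorem~\ref{Beatrice}, with no further work.

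So the first (and essentially only) step is to verify the reconstructibility condition. Fix $j \in \{1,\ldots,k-1\}$ and two points $u < v$ in $\R$. A reconstruction of $\cP$ with $u$ and $v$ as $j$th and $(j+1)$th points is an instance $P \in \cP$, that is, a set directly similar to some fixed representative $\{a_1 < a_2 < \cdots < a_k\}$, whose $j$th smallest element is $u$ and whose $(j+1)$th smallest element is $v$. A direct similarity of $\R$ is an affine map $x \mapsto \lambda x + \mu$ with $\lambda > 0$ (orientation-preserving similarities of the line are exactly the increasing affine maps), so such a map sends $a_i \mapsto \lambda a_i + \mu$ and preserves order. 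Hence $P = \{\lambda a_1 + \mu, \ldots, \lambda a_k + \mu\}$ with its $j$th element equal to $\lambda a_j + \mu = u$ and its $(j+1)$th element equal to $\lambda a_{j+1} + \mu = v$. Subtracting gives $\lambda(a_{j+1} - a_j) = v - u$, and since $a_{j+1} > a_j$ we get $\lambda = (v-u)/(a_{j+1}-a_j)$ uniquely determined and positive; then $\mu = u - \lambda a_j$ is determined as well. Therefore $P$ is uniquely determined by $u$, $v$, and $j$, which shows $\cP$ admits at most one reconstruction from $\leq$-consecutive points. (In fact it admits exactly one, since the map just constructed does produce a valid instance, but only the ``at most one'' direction is needed here.)

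The remaining step is purely formal: apply Theorem~\ref{Beatrice} with this $\cP$, $k$, $n$, and $r$. I anticipate no real obstacle; the content is entirely in recognizing that increasing affine maps are rigid enough to be pinned down by the images of two consecutive points, which is the one-line computation above. The only subtlety worth a sentence is the degenerate possibility that $\cP$ is a $1$-pattern, but the hypothesis $k > 1$ rules this out, and the argument above uses $a_{j+1} > a_j$, which is available precisely because $k > 1$ guarantees at least two distinct points in an instance and $j \in \{1,\ldots,k-1\}$ indexes a genuine consecutive pair.
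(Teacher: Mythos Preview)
Your proposal is correct and follows the same approach as the paper: verify that any $k$-pattern in $\R$ under direct similarity admits at most one (in fact exactly one) reconstruction from $\leq$-consecutive points, then invoke Theorem~\ref{Beatrice}. The paper simply asserts this reconstructibility without the explicit affine-map calculation you supply, so your write-up is, if anything, slightly more detailed than the original.
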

This upper bound is tight in the sense that for each $k>1$, when $\cP$ is the pattern of $k$-term arithmetic progressions, then $\SPn$ equals the bound, as we shall now show.

\section{Arithmetic Progressions in the Line}\label{Christopher}

Here the universe $U$ is the line $\R$ and our pattern is $\cAk$, the class of $k$-term arithmetic progressions.
Because $\cAk$ is invariant under reflection, the results are the same whether we make $\sim$ direct similarity or similarity.
This enables an exact computation of $\SAkn$: we recall and prove Theorem \ref{Thomas}.
\begin{theorem}\label{George}
Let $k > 1$, and let $\cAk$ be the pattern of $k$-term arithmetic progressions on $\R$.
If $n \in \N$ and $r$ is the remainder when $n$ is divided by $k-1$, then $\SAkn= (n-r)(n+r-k+1)/(2 k-2)$.
\end{theorem}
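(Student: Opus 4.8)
The plan is to observe that one direction of the claimed equality is already in hand and to supply the other. Applied to the $k$-pattern $\cP=\cAk$, Theorem~\ref{Deborah} gives $\SAkn \leq (n-r)(n+r-k+1)/(2k-2)$, so the whole content of the theorem is the matching lower bound: it suffices to exhibit a single $n$-point subset of $\R$ containing at least that many $k$-term arithmetic progressions. The natural choice is the $n$-term arithmetic progression itself, and for concreteness I would take $V=\{1,2,\ldots,n\}$.

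For this $V$ I would count $\SAkV$ directly. A $k$-term arithmetic progression lying in $\{1,\ldots,n\}$ is determined by its common difference $d\geq 1$ and its least term $a$, subject to $a\geq 1$ and $a+(k-1)d\leq n$; for each fixed $d$ there are exactly $\max\{0,\,n-(k-1)d\}$ admissible values of $a$. Hence
\[
\SAkV = \sum_{d\geq 1}\max\{0,\,n-(k-1)d\} = \sum_{d=1}^{D}\bigl(n-(k-1)d\bigr) = Dn - (k-1)\frac{D(D+1)}{2},
\]
where $D=\lfloor (n-1)/(k-1)\rfloor$. Writing $n=q(k-1)+r$ with $0\leq r\leq k-2$, one has $D=q$ if $r\geq 1$ and $D=q-1$ if $r=0$; substituting into the last expression and simplifying in each of the two cases yields exactly $(n-r)(n+r-k+1)/(2k-2)$. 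Combined with the upper bound of Theorem~\ref{Deborah}, this proves the theorem.

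A second route, which stays inside the framework of Sections~\ref{Orestes}--\ref{Gerald} and which I expect to be the one that generalizes to the classification of optimal configurations, is to take the balanced $\leq$-orderly $(k-1)$-decomposition of $V=\{1,\ldots,n\}$ into consecutive blocks $V_1\prec\cdots\prec V_{k-1}$ (with $r$ blocks of size $q+1$ and the rest of size $q$) and verify the hypothesis of Lemma~\ref{Francis}. For $v<w$ in $V_j$ the unique reconstruction with $v,w$ in positions $j$ and $j+1$ is the $k$-AP $P$ of common difference $w-v$ through $v$ and $w$; one checks that $P\subseteq V$ (the $j-1$ earlier blocks and the $k-1-j$ later blocks have sizes summing to at least $(j-1)q$ and $(k-1-j)q$ respectively, while $w-v\leq |V_j|-1\leq q$), and that $P$ is of no echelon other than $j$ (a second echelon $i<j$ would force $p_{i+1}\in V_i$ and $p_j\in V_j$ with $j\geq i+2$, hence $p_j-p_{i+1}\geq (j-i-1)q+1$ because the blocks are intervals of integers, contradicting $p_j-p_{i+1}=(j-i-1)(w-v)\leq (j-i-1)q$). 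Lemma~\ref{Francis} then gives $\SAkV=(n-r)(n+r-k+1)/(2k-2)$.

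In either form the argument contains no real surprise once Theorem~\ref{Deborah} is available; the only delicate points are, in the direct count, pinning down the value of $D$ and hence the closed form in the boundary case $r=0$, and, in the framework argument, the ``no second echelon'' verification, which is precisely where one uses that $V$ is a block-decomposed interval of integers. I would present the direct count as the main proof, since it is entirely self-contained, and mention the framework approach as the one that foreshadows the classification of optimal configurations carried out later in this section.
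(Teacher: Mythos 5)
Your proposal is correct and its main argument is exactly the paper's: the upper bound is quoted from Theorem~\ref{Deborah}, and the matching lower bound is the direct count of $k$-term progressions in an $n$-term arithmetic progression by common difference, which is precisely Lemma~\ref{Eustace} (your closed-form simplification in the two cases $r=0$ and $r\geq 1$ checks out). The alternative route via Lemma~\ref{Francis} that you sketch is sound but unnecessary here; as you anticipate, that machinery is what the paper deploys later for the classification of optimal sets.
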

\begin{proof}
This is an immediate consequence of Theorem \ref{Deborah} above and Lemma \ref{Eustace} below.
\end{proof}
\begin{lemma}\label{Eustace}
If $k > 1$ and $n \in \N$ with $r$ the remainder when $n$ is divided by $k-1$, then the number of $k$-term arithmetic progressions in $\{0,1,\ldots,n-1\}$ is $(n-r)(n+r-k+1)/(2 k-2)$.
\end{lemma}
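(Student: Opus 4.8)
The plan is to convert the counting of arithmetic progressions into the counting of pairs of congruent residues, and then quote the arithmetic identity already established in the proof of Theorem~\ref{Beatrice}.

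First I would note that every $k$-term arithmetic progression $P\subseteq\{0,1,\ldots,n-1\}$, written $P=\{p_1<p_2<\cdots<p_k\}$, satisfies $p_i=p_1+(i-1)d$ with $d=p_2-p_1=(p_k-p_1)/(k-1)\geq 1$; in particular $P$ is determined by the pair $(p_1,p_k)$, and $p_k-p_1$ is a positive multiple of $k-1$. Conversely, given any $x<y$ in $\{0,1,\ldots,n-1\}$ with $k-1\mid y-x$, setting $d=(y-x)/(k-1)$ produces the $k$-term arithmetic progression $\{x,x+d,\ldots,x+(k-1)d\}$, all of whose terms lie between $x$ and $y$ and hence inside $\{0,1,\ldots,n-1\}$, and whose extreme terms are $x$ and $y$. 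Thus $P\mapsto\{p_1,p_k\}$ is a bijection from the set of $k$-term arithmetic progressions in $\{0,1,\ldots,n-1\}$ onto the set of $2$-subsets $\{x,y\}$ of $\{0,1,\ldots,n-1\}$ with $x\equiv y\pmod{k-1}$. (When $k=2$ the congruence is vacuous and this recovers $\binom{n}{2}$.)

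Next I would count these $2$-subsets by partitioning $\{0,1,\ldots,n-1\}$ into its $k-1$ residue classes $C_0,\ldots,C_{k-2}$ modulo $k-1$. A $2$-subset $\{x,y\}$ has $x\equiv y\pmod{k-1}$ exactly when both points lie in a common class, so the desired number is $\sum_{c=0}^{k-2}\binom{\card{C_c}}{2}$. Writing $n=q(k-1)+r$ with $0\leq r\leq k-2$, exactly $r$ of the classes have $q+1$ elements and the remaining $k-1-r$ have $q$ elements, so the count equals $r\binom{q+1}{2}+(k-1-r)\binom{q}{2}$.

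Finally I would invoke the elementary identity $r\binom{q+1}{2}+(k-1-r)\binom{q}{2}=(n-r)(n+r-k+1)/(2k-2)$, which is precisely the computation carried out at the end of the proof of Theorem~\ref{Beatrice}, finishing the argument. I do not expect a serious obstacle: the only points requiring a little care are the verification that the first-and-last-term map is genuinely onto (one must confirm the reconstructed progression stays within the interval) and the bookkeeping that produces the class sizes $q+1$ and $q$. One could instead bypass residues and evaluate $\sum_{d\geq 1}\max(n-(k-1)d,0)=\sum_{d=1}^{\lfloor (n-1)/(k-1)\rfloor}(n-(k-1)d)$ directly, splitting into the cases $r\geq 1$ and $r=0$ to resolve the floor; but the residue-class formulation has the advantage of literally reproducing an identity already in hand.
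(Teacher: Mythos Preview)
Your proof is correct, but it takes a different route from the paper's. The paper counts progressions by their common difference: for each step size $s\geq 1$ with $s(k-1)\leq n-1$, there are exactly $n-s(k-1)$ progressions of that step size, and summing these gives the claimed value. You instead set up a bijection between $k$-term progressions and $2$-subsets $\{x,y\}$ with $x\equiv y\pmod{k-1}$, then count such pairs by partitioning $\{0,\ldots,n-1\}$ into residue classes modulo $k-1$. Your approach has the pleasant feature of recycling the arithmetic identity $r\binom{q+1}{2}+(k-1-r)\binom{q}{2}=(n-r)(n+r-k+1)/(2k-2)$ already established in the proof of Theorem~\ref{Beatrice}, so no new computation is needed; it also makes transparent \emph{why} the lower bound matches the upper bound of Theorem~\ref{Deborah}, since both are counting pairs within a balanced $(k-1)$-decomposition. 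The paper's argument is a touch shorter and entirely self-contained, requiring only a single arithmetic-series summation (indeed, you sketch this very alternative at the end of your proposal). Both are perfectly valid; your version trades a little extra setup (the bijection and the residue-class cardinalities) for a cleaner finish.
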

\begin{proof}
For each positive integer $s \leq n/(k-1)$, our set $\{0,1,\ldots,n-1\}$ contains precisely $n-s(k-1)$ arithmetic progressions having $k$ points with distance $s$ between consecutive points.  The result follows by adding these quantities for $1 \leq s \leq n/(k - 1)$.
\end{proof}
Now that we know the precise value of $\SAkn$, we would like to completely classify the $n$-subsets $V$ of $\R$ achieving $\SAkV=\SAkn$.
We call such $n$-sets {\it optimal} for $k$-term arithmetic progressions.
Lemma \ref{Eustace} above shows that $n$-term arithmetic progressions are always optimal for $k$-term arithmetic progressions, but in many cases there are other optimal sets, and we now classify them (up to similarity).

All sets are trivially optimal for $1$- or $2$-term arithmetic progressions, and any $n$-set with $n < k$ is trivially optimal for $k$-term arithmetic progressions.
The {\it barycenter} of an arithmetic progression is the arithmetic mean of its points.
Two progressions in $\Z$ are said to be {\it concentric} if they have the same barycenter, or {\it nearly concentric} if their barycenters differ by $1$.
\begin{proposition}\label{Imogene}
For $n \geq 3$, an $n$-subset of $\R$ is optimal for $3$-term arithmetic progressions if and only if, up to similarity, it is the union $E \cup O$ of two nonempty arithmetic progressions, where $E$ consists of consecutive even integers and $O$ consists of consecutive odd integers and $E$ and $O$ are concentric if $n$ is odd, or nearly concentric if $n$ is even.
\end{proposition}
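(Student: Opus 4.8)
The plan is to reduce the problem to the combinatorial criterion of Lemma~\ref{Francis} and then analyze the configurations it allows. Since $\cAthree$ is a $3$-pattern that is uniquely reconstructible from $\preceq$-consecutive points (with $\preceq$ the usual order on $\R$), Lemma~\ref{Francis} specializes as follows: if $V$ is an $n$-set with a $\preceq$-orderly balanced $2$-decomposition $\cV=\{V_1\prec V_2\}$, then $V$ attains $\SAthreen$ if and only if (i) $2w-v\in V$ for every $v\prec w$ in $V_1$, and (ii) $2v-w\in V$ for every $v\prec w$ in $V_2$. (The ``not of echelon $i\neq j$'' clauses are automatic: a reconstruction of echelon $1$ has its second point in $V_1$ and one of echelon $2$ has its second point in $V_2$, while $V_1\cap V_2=\emptyset$.) When $n$ is even this decomposition is unique; when $n$ is odd there are two, and optimality of $V$ is equivalent to (i)--(ii) for either one. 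Throughout I would translate so that $\min V=0$ and write $V=\{a_1=0<a_2<\dots<a_n\}$ with gaps $g_i=a_{i+1}-a_i$ and $m=\card{V_1}$.

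For the ``if'' direction, given $V=E\cup O$ as in the statement I would verify (i) and (ii) directly for the essentially forced balanced decomposition. Since $V$ is symmetric about the common barycenter when $n$ is odd and nearly so when $n$ is even, the lower part of $V$ goes into $V_1$; for $v\prec w$ in $V_1$ the point $2w-v=w+(w-v)$ has the same parity as $v$ and exceeds $v$, and using the concentricity hypothesis one checks it does not exceed the top of whichever of $E,O$ contains $v$, hence lies in $V$, and (ii) is symmetric. As the number of $3$-term progressions is a similarity invariant, this yields optimality of every set similar to such an $E\cup O$. (Alternatively one may count the $3$-term progressions in $E\cup O$ according to the parity of the common difference and compare with $\floor{(n-1)^2/4}$, the value given by Theorem~\ref{Deborah}.)

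The substance is the ``only if'' direction, which I would prove by a structural analysis of the gap sequence. First, conditions (i)--(ii) applied to $\preceq$-consecutive pairs force the gap sequence to be V-shaped, $g_1\geq\dots\geq g_m\leq\dots\leq g_{n-1}$: if $a_i,a_{i+1}\in V_1$ then $2a_{i+1}-a_i=a_{i+1}+g_i\in V$ lies above $a_{i+1}$, hence is at least $a_{i+2}$, giving $g_i\geq g_{i+1}$, and symmetrically inside $V_2$. Second, applying (i) with the pair $a_1,a_j$ gives the stronger fact $2a\in V$ for every $a\in V_1$, and symmetrically $2a-a_n\in V$ for every $a\in V_2$. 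The heart of the argument is to combine these facts to show that the gaps take at most two distinct values, a smaller value $g$ and the larger value $2g$, occurring (up to a reflection, which preserves both optimality and the target form) in a block pattern $(2g)^{j}g^{k}(2g)^{l}$ with $j+k+l=n-1$; that the outer blocks are nearly balanced, $\card{j-l}\leq1$; and that $k$ is even whenever $j+l\geq1$. I expect this gap-structure step to be the main obstacle: the V-shape alone does not rule out a configuration with an odd central block and nonempty outer blocks (for instance gaps $2,1,2$), and excluding such configurations is precisely where the doubling condition $2a\in V$ is needed. Once the gap structure is pinned down, rescaling by $1/g$ and translating produces a subset of $\Z$ with gap pattern $2^{j}1^{k}2^{l}$; its even elements then form a run of consecutive even integers, its odd elements a run of consecutive odd integers, and the condition $\card{j-l}\leq1$ translates exactly into their barycenters being equal when $n$ is odd and differing by $1$ when $n$ is even. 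Finally, the cases $n\leq3$, in which every optimal set is just a $3$-term progression, are of the stated form and can be checked by hand.
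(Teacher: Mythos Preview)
Your reduction to Lemma~\ref{Francis} and the verification of the ``if'' direction are in the same spirit as the paper's, and your observation that the echelon clause is automatic for $k=3$ is correct.  The substantive difference is in the ``only if'' direction, where you and the paper choose different normalizations, and this choice matters.

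The paper normalizes so that the \emph{median} of $V$ is $0$ and the next point is $1$, putting $0$ into $V_2$.  Condition~(ii) applied to the pair $(0,v)$ for each positive $v\in V_2$ then gives $-v\in V$, and a count shows $V=-V$: reflection symmetry comes for free.  With symmetry in hand, the pair $(1,v)$ in $V_2$ gives $2-v\in V$, hence $v-2\in V$ by symmetry, which bounds consecutive gaps by~$2$; an easy induction then forces all points to be integers, so gaps are $1$ or $2$, and the V-shape finishes the odd case.  The even case is handled separately by the technical Lemma~\ref{Hannah}, which shows that deleting a suitable endpoint of an optimal set leaves an optimal set, reducing to odd~$n$.

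Your normalization puts $a_1=0$, which yields the doubling condition $2a\in V$ for $a\in V_1$ but not symmetry.  You then aim to deduce the gap pattern $(2g)^j g^k (2g)^l$ directly, and you correctly flag this as the main obstacle.  It is a real one: from V-shape and doubling alone it is not clear how you would show that the gaps take only two values, nor that the larger is exactly twice the smaller---in the paper this drops out of integrality, which in turn rests on symmetry.  Your example $2,1,2$ is indeed killed by doubling, but you have not indicated a mechanism that handles the general case.  The approach is not wrong, but without the median-normalization idea the step you have isolated is genuinely harder to complete; the paper's trick is precisely what makes it routine.
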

\begin{remark}
Before we embark upon the proof, we pause to note that the set $E\cup O$ described here is an arithmetic progression when $\card{E}$ and $\card{O}$ are either equal, or one differs from the other by $1$.
\end{remark}
{\it Proof of Proposition \ref{Imogene}}: In view of the value of $\SAthreen$ from Theorem \ref{George}, we may use Lemma \ref{Francis} as the criterion for optimality.
Suppose that $V$ is an $n$-subset of $\R$ and $\cV=\{V_1 < V_2\}$ is a $\leq$-orderly, balanced $2$-decomposition of $V$.
Because arithmetic progressions are uniquely reconstructible from consecutive points, and since no $3$-term progression may be both of echelon $1$ and of echelon $2$ (since that would require it to have more than three points), Lemma \ref{Francis} shows that $V$ is optimal for $3$-term arithmetic progressions if and only if for every $j \in \{1,2\}$ and every pair of distinct $v, w \in V_j$, the reconstruction of $\cAthree$ with $v$ and $w$ as $j$th and $(j+1)$th points is contained in $V$.

First, suppose $V$ has the form $E \cup O$ as described in the statement of this proposition.
If $\card{E}$ and $\card{O}$ are equal or differ by $1$, then $V$ is itself an arithmetic progression, and hence optimal by Lemma \ref{Eustace}, so we may assume that $\card{E}\not=\card{O}$.
We may then view $V$ as $X \cup Y$, where $X$ is the larger of $E$ and $O$, while $Y$ contains the smaller of $E$ and $O$, along with any points from the larger of $E$ and $O$ that lie between the points of the smaller of $E$ and $O$.
Thus $X$ is an arithmetic progression with distance $2$ between its consecutive points, while $Y$ is an arithmetic progression with distance $1$ between its consecutive points.
We let $\cV=\{V_1 < V_2\}$ be an $\leq$-orderly, balanced $2$-decomposition of $V$.
Then $\cV$ induces $\leq$-orderly, balanced $2$-decompositions of $X$ and $Y$, which are $\cX=\{X\cap V_1, X\cap V_2\}$ and $\cY=\{Y\cap V_1, Y\cap V_2\}$, respectively.
Now suppose that $v_1 < v_2$ are points in $V_1$, and let $v_3$ be the point that makes $\{v_1 < v_2 < v_3\}$ a $3$-term arithmetic progression.
If $v_1$ and $v_2$ are both in $X_1$ (resp., $Y_1$), then since $X$ (resp., $Y$) is an arithmetic progression, and hence optimal for $3$-term arithmetic progressions, we may apply Lemma \ref{Francis} to conclude that $v_3 \in X$ (resp., $Y$), and hence $v_3 \in V$.
In the remaining case, where $v_1 \in X_1$ and $v_2 \in Y_1$, we note that $v_3$ cannot be larger than the largest point in $V$ and has the same parity as $v_1$, hence $v_3 \in X \subseteq V$.
By the same argument, if $v_2 < v_3$ are two points in $V_2$, then the point $v_1$ that makes $\{v_1 < v_2 < v_3\}$ a $3$-point arithmetic progression is always in $V$.
Thus our set $V$ satisfies the sufficient criteria of Lemma \ref{Francis} for optimality.

Now suppose that $V$ is an $n$-subset of $\R$ that is optimal for $3$-term arithmetic progressions.
We want to show that $V$ has the form described in the statement of this proposition.
First we deal with the case where $n$ is odd, and without loss of generality, we may apply a similarity transformation so that the middle point of $V$ is $0$, and the next point in $V$ is $1$.
We let $\cV=\{V_1 < V_2\}$ be an $\leq$-orderly, balanced $2$-decomposition of $V$ with the negative points in $V_1$, and the nonnegative points in $V_2$.
By Lemma \ref{Francis}, for every positive $v_2 \in V_2$, the $3$-term arithmetic progression $-v_2 < 0 < v_2$ must lie in $V$, so the point $-v_2$ must lie in $V_1$.
Thus $V$ has reflection symmetry about $0$.

If $u < v < w$ are consecutive points in $V_2$, then by Lemma \ref{Francis}, the $3$-term progression $2 v- w < v < w$ must lie in $V$, and so $2 v - w \leq u$, that is, $w-v \geq v-u$.
So the spacings between consecutive points of $V_2$ are nondecreasing as we proceed to the right.

If $1 \leq u < v$ are consecutive points in $V_2$, then by Lemma \ref{Francis}, the $3$-term progression $2-v < 1 < v$ lies in $V$, so that $v-2$ lies in $V$ by reflection symmetry, and so $u \geq v-2$.
So the spacing between any two consecutive points of $V_2$ is no greater than $2$.

We also claim that all the points of $V_2$ are integers: we proceed by induction.
The first two elements of $V_2$ are $0$ and $1$, so suppose that $v \in V_2$ with $v> 1$, and we know that all elements of $V_2$ less than $v$ are integers.
By Lemma \ref{Francis}, the arithmetic progression $2-v < 1 < v$ lies in $V$, so that $|v-2|$ lies in $V$ by reflection symmetry, so $|v-2|$ must be an element of $V_2$ less than $v$, hence an integer.  So $v$ is an integer.

Thus spacings between any consecutive points of $V_2$ is always $1$ or $2$, and these spacings are nondecreasing as we proceed to the right.
Since $V$ has reflection symmetry about $0$, we know that $V$ has the form described in this proposition.
This completes the proof when $n$ is odd.

Now suppose that $n$ is even and $V$ is an $n$-subset of $\R$ optimal for $3$-term arithmetic progressions.
Then the technical Lemma \ref{Hannah}, whose proof is delayed to the end of this section, shows that an $(n-1)$-set $V^\prime$ obtained from $V$ by removing either the leftmost or rightmost point is optimal for $3$-term arithmetic progressions.
By applying similarity transformations, we may assume that the rightmost point $r$ is removed and that $V^\prime$ has the form described in the statement of this proposition, with $0$ being the middle point of $V^\prime$.
Then $V^\prime$ contains the point $1$ since the set of odd points in $V^\prime$ is nonempty and centered at $0$.
Let $r^\prime$ be the rightmost point of $V^\prime$.
A $\leq$-orderly, balanced $2$-decomposition $\cV=\{V_1 < V_2\}$ of $V$ places all the nonpositive points into $V_1$ and all the positive points into $V_2$.

In view of Lemma \ref{Francis}, the $3$-term progression $2-r < 1 < r$ lies in $V$, so $2-r \in V^\prime$.
Thus $r$ must be an integer, and by reflection symmetry in $V^\prime$, we must have $r-2 \leq r^\prime$, that is $r\in \{r^\prime+1,r^\prime+2\}$.
 It is immediate that $V^\prime \cup \{r^\prime+2\}$ is of the form described in the proposition, so we are done if $r=r^\prime+2$.
If $r=r^\prime+1$, then by Lemma \ref{Francis}, the $3$-term progression $r^\prime-1 < r^\prime < r$ is contained in $V$, so then $V^\prime$ must consist of all consecutive integers from $-r^\prime$ to $r^\prime$, and then it is clear that $V=V^\prime \cup \{r\}$ is of the form described in the proposition. \hfill \qedsymbol
\begin{proposition}\label{Oliver}
For $n \geq k \geq 4$, an $n$-subset $V$ of $\R$ is optimal for $k$-term arithmetic progressions if and only if, up to similarity, $V$ is an arithmetic progression or $k-1$ divides $n$ and $V$ is an arithmetic progression with the second point deleted.
\end{proposition}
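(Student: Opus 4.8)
The plan is to use Lemma~\ref{Francis} as the optimality criterion throughout --- which is legitimate because Theorem~\ref{George} gives the exact value of $\SAkn$ --- and to exploit that its hypothesis holds for \emph{every} $\leq$-orderly balanced $(k-1)$-decomposition of the set in question. The ``if'' direction is quick: an $n$-term arithmetic progression is optimal by Lemma~\ref{Eustace}, and for the set obtained by deleting the second point of an $(n+1)$-term progression when $k-1\mid n$ --- realized as $V=\{0,2,3,\ldots,n\}$ --- I would compare directly with the $n$-term progression $\{1,2,\ldots,n\}$: the $k$-term progressions of $V$ avoiding $0$ coincide with those of $\{2,\ldots,n\}$, while those through $0$ are the sets $\{0,d,2d,\ldots,(k-1)d\}$ with $2\le d\le n/(k-1)$, numbering $n/(k-1)-1$, which equals $\floor{(n-1)/(k-1)}$, the number of $k$-term progressions of $\{1,\ldots,n\}$ through its endpoint, precisely because $k-1\mid n$. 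The same count drops below $\SAkn$ when $k-1\nmid n$, so no extra optimal set of this shape appears in that case.

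For the ``only if'' direction I would argue by strong induction on $n$. The base case $n=k$ is immediate, since $\SAkn=1$ there, so the optimal $k$-sets are exactly the $k$-term progressions, and $k-1\nmid k$ as $k\ge 4$. For $n>k$, let $V=\{a_1<\cdots<a_n\}$ be optimal. The first ingredient is a \emph{reduction lemma}: at least one of $V\smallsetminus\{a_1\}$ and $V\smallsetminus\{a_n\}$ is again optimal for $k$-term progressions on $n-1$ points. Granting this, and using that the problem is invariant under reflection, we may assume $V':=V\smallsetminus\{a_1\}$ is optimal, so by the inductive hypothesis $V'$ is, up to similarity, either an arithmetic progression or an arithmetic progression with its second point deleted (the latter requiring $k-1\mid n-1$).

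It then remains to determine which points $a_1<\min V'$ can be prepended to $V'$ with $V$ staying optimal. Since $V$ and $V'$ are both optimal, the number of $k$-term progressions of $V$ through $a_1$ must equal $\SAkn-S_{\cAk}(n-1)$; as $a_1=\min V$ these are exactly the sets $\{a_1,a_1+d,\ldots,a_1+(k-1)d\}$ lying in $V$, and this condition forces $a_1$ to be an integer (after the appropriate normalization of $V'$) and then pins $a_1$ to at most two values. When $V'$ is the $(n-1)$-term progression $\{0,1,\ldots,n-2\}$, the count matches only for $a_1=-1$ (giving an $n$-term progression) or, when $k-1\mid n$, also for $a_1=-2$ (giving a progression with its second point deleted); when $V'$ is a progression-minus-second-point, the analogous count comes out \emph{strictly less} than $\SAkn-S_{\cAk}(n-1)$, so $V$ cannot then be optimal and that case does not occur. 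This reproduces exactly the list in the proposition.

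The main obstacle is the reduction lemma, the general-$k$ analogue of the technical Lemma~\ref{Hannah} used in the case $k=3$. Note first that, $V$ being optimal, deleting either endpoint yields an $(n-1)$-set, so the number of $k$-term progressions of $V$ through $a_1$ (respectively $a_n$) is automatically at least $\SAkn-S_{\cAk}(n-1)$, with equality exactly when $V\smallsetminus\{a_1\}$ (respectively $V\smallsetminus\{a_n\}$) is optimal; thus the lemma asks that these two numbers cannot \emph{both} strictly exceed $\SAkn-S_{\cAk}(n-1)$. To prove it I would use that, for a fixed balanced decomposition $\{V_1<\cdots<V_{k-1}\}$, optimality forces $V$ to have exactly $\binom{\card{V_1}}{2},\ldots,\binom{\card{V_{k-1}}}{2}$ progressions of echelons $1,\ldots,k-1$ respectively, each of a unique echelon (Lemma~\ref{Francis}), and then play the choice of decomposition, the reconstructions from non-consecutive pairs inside $V_1$ and $V_{k-1}$, the no-double-echelon clause, and a parity/reflection argument against each other; the small cases, where $\floor{n/(k-1)}$ is too small for the decomposition to have room for the sliding arguments, would be checked by hand. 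An alternative, closer to the proof of Proposition~\ref{Imogene}, would skip the reduction lemma and work directly from Lemma~\ref{Francis}: reconstructions from $\leq$-consecutive pairs in interior parts $V_j$ with $2\le j\le k-2$ (available since $k\ge 4$) force the successive gaps $a_{i+1}-a_i$ to agree except near the two ends, and the non-consecutive reconstructions together with the no-double-echelon clause then cut the end behaviour down to the two allowed shapes --- the endgame at the two extreme gaps being, once again, the delicate point.
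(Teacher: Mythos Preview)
Your approach differs from the paper's in a fundamental way: you attempt induction on $n$, with a reduction lemma asserting that at least one of $V\smallsetminus\{a_1\}$ and $V\smallsetminus\{a_n\}$ remains optimal for $k$-term progressions, whereas the paper inducts on $k$. The gap in your argument is precisely that reduction lemma, which you correctly identify as ``the main obstacle'' but do not prove; the phrase ``play the choice of decomposition, the reconstructions from non-consecutive pairs \ldots and a parity/reflection argument against each other'' is not a proof, and a general-$k$ analogue of Lemma~\ref{Hannah} does not appear appreciably easier than the proposition itself. Your subsequent case analysis (prepending $a_1$ to an already-classified $V'$) and your ``if'' direction are fine, but they rest on this unproved step.

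The paper sidesteps the difficulty entirely by removing a whole block of the decomposition rather than a single endpoint. Its key lemma (Lemma~\ref{minusOneOptimal}) says: if $V$ is optimal for $k$-term progressions with balanced decomposition $\{V_1<\cdots<V_{k-1}\}$, then both $V_1\cup\cdots\cup V_{k-2}$ and $V_2\cup\cdots\cup V_{k-1}$ are optimal for $(k-1)$-term progressions. The proof is three lines: the $k$-term progression through a pair in $V_j$ (from Lemma~\ref{Francis}) has unique echelon $j$, so at most one of its points lies in $V_{k-1}$, hence at least $k-1$ of its points survive in $V_1\cup\cdots\cup V_{k-2}$. This drops $k$ by one, not $n$ by one. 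The paper then treats $k=4$ by reducing to Proposition~\ref{Imogene} (the $k=3$ classification) on $V_1\cup V_2$ and on $V_2\cup V_3$, extracts that consecutive gaps are all equal except possibly at the two ends, and finishes with a dedicated endpoint lemma (Lemma~\ref{Elizabeth}) that also supplies the divisibility constraint. For $k\ge 5$ the same block removal reduces to the already-established case $k-1$. Your alternative sketch at the end --- working inside interior blocks $V_j$ with $2\le j\le k-2$ --- is closer in spirit, but it is the block-removal lemma that turns that intuition into a short rigorous argument and avoids leaving the ``endgame at the two extreme gaps'' as a second unresolved obstacle.
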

\begin{proof}
We first analyze all optimal sets $V$ for $4$-term arithmetic progressions. 
Suppose that $V$ is such an $n$-subset of $\R$ and $n \geq 4$. Let $\cV=\{V_1 < V_2 < V_3\}$ with $\card{V_2} \geq 2$ be an $\leq$-orderly, balanced $3$-decomposition of $V$.
The technical Lemma \ref{minusOneOptimal} below shows that if we remove $V_3$ from $V$, then we are left with an optimal set for $3$-term arithmetic progressions.
Since $V_1 \cup V_2$ is optimal for $3$-term arithmetic progressions, Proposition \ref{Imogene} tells us that without loss of generality, we may take $V_1\cup V_2$ to be the union of a set $E$ of consecutive even integers and a set $O$ of consecutive odd integers, which either have a common center $c$ (if $\card{E \cup O}$ is odd) or $E$ and $O$ have distinct centers $d$ and $d+1$ in some order.

We now prove that the distance between the leftmost two points of $V_2$ is $1$.
Since $V_1 \cup V_2 = E \cup O$ and $E$ and $O$ are nonempty, $V_1 \cup V_2$ contains a set of three consecutive integers.
The integers are $c-1, c, c+1$ if $\card{E \cup O}$ is odd or one of $d-1,d,d+1$ or $d,d+1,d+2$ if $\card{E \cup O}$ is even.
Then the three consecutive points are either the two rightmost of $V_1$ and the one leftmost of $V_2$ or the one rightmost of $V_1$ and the two leftmost of $V_2$.
The first case implies the second one, since if the rightmost two points of $V_1$ have a spacing $1$, then the leftmost two points of $V_2$ will complete a $4$-term arithmetic progression with the former two points.
So we know that distance between the leftmost two points of $V_2$ is $1$.

By Lemma \ref{minusOneOptimal}, the union of $V_2$ and $V_3$ is optimal for $3$-term arithmetic progressions.
Now since the leftmost two points of $V_2 \cup V_3$ are at distance $1$, it follows from Proposition \ref{Imogene} that $V_2 \cup V_3$ is either an arithmetic progression, or an arithmetic progression with the second-to-last point removed.
Similarly, $V_1 \cup V_2$ is either an arithmetic progression or an arithmetic progression with the second point removed.
By technical Lemma \ref{Elizabeth} below, $V$ is either an arithmetic progression or else an arithmetic progression with either the second point or the second-to-last point (but not both) removed, and these latter two cases can only occur if $3 \mid n$.
Conversely, Lemma \ref{Elizabeth} shows that sets $V$ with these forms really are optimal for $4$-term arithmetic progressions.

Now we analyze all optimal sets for $k$-term arithmetic progressions where $k \geq 5$. For $n \geq k$, let $\cV=\{V_1 < V_2 < \ldots < V_{k-1}\}$ be an $\leq$-orderly, balanced $(k-1)$-decomposition of a $n$-subset $V$ of $\R$, where we insist $\card{V_2} \geq 2$. 
Then $V' = V_1 \cup \ldots \cup V_{k-2}$ and $V'' = V_2 \cup \ldots \cup V_{k-1}$ are optimal for $(k-1)$-term arithmetic progressions by Lemma \ref{minusOneOptimal}. 
Hence, without loss of generality $V'$ is either an arithmetic progression of consecutive integers, or what one obtains by removing either the second or second-to-last point (but not both) from such an arithmetic progression.
If the rightmost two points of $V_{k-2}$ had spacing $2$, then $V''$ would begin with two points at spacing $1$, then later would have a spacing of $2$ between consecutive points, neither of which would be endpoints, and this would contradict the optimality of $V''$ for $(k-1)$-term arithmetic progressions.
So any spacing of $2$ between consecutive points in $V'$ must be between the leftmost two points. 
And by a similar argument with $V''$, we see that all consecutive points of $V''$ have spacing $1$, except possibly the two rightmost, which can have spacing $2$.
Thus all consecutive points of $V$ must have spacing $1$, except for possible spacings of $2$ between the leftmost pair and the rightmost pair.
By Lemma \ref{Elizabeth}, $V$ is either an arithmetic progression or else an arithmetic progression with either the second point or the second-to-last point (but not both) removed, and these latter two cases can only occur if $k-1 \mid n$).
Conversely, Lemma \ref{Elizabeth} shows that sets $V$ with these forms really are optimal for $k$-term arithmetic progressions.
\end{proof}
We close with the technical lemmata used to prove the two propositions above.
\begin{lemma}\label{Hannah}
Let $V$ be a $n$-subset of $\R$ that is optimal for $3$-term arithmetic progressions.
Then either the $(n-1)$-set obtained by removing the leftmost point of $V$ or the $(n-1)$-set obtained by removing the rightmost point of $V$ is optimal for $3$-term arithmetic progressions.
\end{lemma}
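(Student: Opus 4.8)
The plan is to reduce the lemma to a counting statement about the two endpoints of $V$. Write $v_{\min}$ and $v_{\max}$ for the leftmost and rightmost points of $V$, and let $a$ (resp.\ $b$) be the number of $3$-term arithmetic progressions contained in $V$ and passing through $v_{\min}$ (resp.\ $v_{\max}$). Since deleting a point of $V$ destroys precisely the progressions through that point, $S_{\cAthree}(V\smallsetminus\{v_{\min}\})=S_{\cAthree}(V)-a$ and $S_{\cAthree}(V\smallsetminus\{v_{\max}\})=S_{\cAthree}(V)-b$. A direct computation from Theorem \ref{George}, splitting into the cases $n$ even and $n$ odd, gives $S_{\cAthree}(n)-S_{\cAthree}(n-1)=\floor{(n-1)/2}$. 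As $V$ is optimal and $V\smallsetminus\{v_{\min}\}$, $V\smallsetminus\{v_{\max}\}$ are $(n-1)$-point sets, we get $a\geq\floor{(n-1)/2}$ and $b\geq\floor{(n-1)/2}$; so it suffices to show $a=\floor{(n-1)/2}$ or $b=\floor{(n-1)/2}$, since the corresponding deletion then leaves $S_{\cAthree}(n)-\floor{(n-1)/2}=S_{\cAthree}(n-1)$ progressions, i.e.\ an optimal $(n-1)$-set.

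To estimate $a$ and $b$ I would bring in Lemma \ref{Francis}. Fix a $\leq$-orderly, balanced $2$-decomposition $\cV=\{V_1<V_2\}$ of $V$ (for $n\geq2$, which we may assume, both blocks are nonempty). Since $V$ is optimal and no $3$-term progression may be both of echelon $1$ and of echelon $2$, Lemma \ref{Francis} tells us that $2q-p\in V$ for every pair $p<q$ in $V_1$, and $2p-q\in V$ for every pair $p<q$ in $V_2$; note also $v_{\min}\in V_1$ and $v_{\max}\in V_2$. Every $3$-term progression through $v_{\min}$ has the form $\{v_{\min}<m<2m-v_{\min}\}$ with $m\in V$. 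If $m\in V_1$ (so $m\in V_1\smallsetminus\{v_{\min}\}$), then $2m-v_{\min}\in V$ automatically by the property of $V_1$, so these account for exactly $\card{V_1}-1$ progressions; writing $A$ for the number of $m\in V_2$ with $2m-v_{\min}\in V$, we get $a=\card{V_1}-1+A$. By the symmetric argument with the property of $V_2$, $b=\card{V_2}-1+B$, where $B$ is the number of $m\in V_1$ with $2m-v_{\max}\in V$.

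The key step is the dichotomy that $A=0$ or $B=0$. If $A\geq1$, choose $m_2\in V_2$ with $2m_2-v_{\min}\in V$; since $v_{\max}$ is the largest point of $V$, this forces $2m_2-v_{\min}\leq v_{\max}$, that is, $m_2\leq(v_{\min}+v_{\max})/2$. If also $B\geq1$, choose $m_1\in V_1$ with $2m_1-v_{\max}\in V$; since $v_{\min}$ is the smallest point of $V$, this forces $m_1\geq(v_{\min}+v_{\max})/2$. Then $m_1\geq(v_{\min}+v_{\max})/2\geq m_2$, contradicting $m_1<m_2$, which holds because $V_1<V_2$. Hence $A=0$ or $B=0$. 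If $A=0$, then $a=\card{V_1}-1$, and balancedness gives $\card{V_1}\leq\ceil{n/2}$, so $a\leq\ceil{n/2}-1=\floor{(n-1)/2}$; together with $a\geq\floor{(n-1)/2}$ this gives $a=\floor{(n-1)/2}$, so $V\smallsetminus\{v_{\min}\}$ is optimal. The case $B=0$ similarly makes $V\smallsetminus\{v_{\max}\}$ optimal.

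The main obstacle is the bookkeeping, not any hard estimate: one must hit on the idea of splitting the progressions through an endpoint according to where their middle point falls relative to a decomposition that witnesses optimality of $V$. Once this is done, Lemma \ref{Francis} pins down $a-A$ and $b-B$ exactly, while extremality of $v_{\min}$ and $v_{\max}$ confines the support of $A$ and $B$ to points on one side of the midpoint $(v_{\min}+v_{\max})/2$, so that $A$ and $B$ cannot both be nonzero. The remaining verifications, that $S_{\cAthree}(n)-S_{\cAthree}(n-1)=\floor{(n-1)/2}$ and $\ceil{n/2}-1=\floor{(n-1)/2}$, are routine.
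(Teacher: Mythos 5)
Your argument is correct, but it takes a genuinely different route from the paper's. The paper fixes a balanced orderly $2$-decomposition $\{V_1<V_2\}$ with $\card{V_1}\geq\card{V_2}$, splits into cases according to whether the leftmost point $\ell$ begins a $3$-term progression whose other two points lie in $V_2$, and in each case deletes the appropriate endpoint and re-verifies the sufficiency criterion of Lemma \ref{Francis} for the reduced set; this forces extra bookkeeping to keep the smaller decomposition balanced (for odd $n$ a point is shifted from $V_1$ to $V_2$). You instead count: from Theorem \ref{George} you compute $\SAthreen-S_{\cAthree}(n-1)=\floor{(n-1)/2}$, so optimality of $V$ gives $a,b\geq\floor{(n-1)/2}$ for the numbers $a,b$ of progressions through the two endpoints, and it suffices to get equality for one of them. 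The forward direction of Lemma \ref{Francis} pins down $a=\card{V_1}-1+A$ and $b=\card{V_2}-1+B$ (a clean partition of the progressions through an endpoint by the block containing the middle term), and your midpoint dichotomy ($A\geq1$ forces some $m_2\in V_2$ with $m_2\leq(v_{\min}+v_{\max})/2$, $B\geq1$ forces some $m_1\in V_1$ with $m_1\geq(v_{\min}+v_{\max})/2$, contradicting $V_1<V_2$) shows $A=0$ or $B=0$; note that your case split on $A$ is in fact the same dichotomy the paper uses, but your verification is by counting rather than by exhibiting a witnessing decomposition of the reduced set. What your approach buys: no re-balancing, no parity cases, and only the ``only if'' half of Lemma \ref{Francis} is needed together with $\ceil{n/2}-1=\floor{(n-1)/2}$. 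The cost is that you invoke the exact value of $\SAthreen$, which the paper's proof of Lemma \ref{Hannah} does not need; since Theorem \ref{George} is established before and independently of Lemma \ref{Hannah}, this creates no circularity. All steps check out, including the routine identities you flag at the end.
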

\begin{proof}
Let $\cV=\{V_1 < V_2\}$ be a $\leq$-orderly, balanced $2$-decomposition of $V$, with $\card{V_1} \geq \card{V_2}$.
By Lemma \ref{Francis}, we see that any pair of points in $V_1$ (resp., $V_2$) are the leftmost (resp., rightmost) two points of a $3$-term progression in $V$.

Suppose that the leftmost point $\ell$ of $V$ is such that there is no $3$-term progression $\ell < u < v$ with $u,v \in V_2$.
Then let $V^\prime=V\smallsetminus\{\ell\}$ with $\leq$-orderly, balanced $2$-decomposition $\cV^\prime=\{V_1\smallsetminus\{\ell\}, V_2\}$, and we see that any pair of points in $V_1\smallsetminus\{\ell\}$ (resp., $V_2$) are the leftmost (resp., rightmost) two points of a $3$-term progression in $V^\prime$, so $V^\prime$ is optimal by Lemma \ref{Francis}.

Now suppose that the leftmost point $\ell$ of $V$ is such that $\ell < m < r$ is a $3$-term progression with $m, r \in V_2$.
Then $m$ and $r$ must respectively be the leftmost and rightmost points of $V_2$, for otherwise Lemma \ref{Francis} would dictate that the leftmost and rightmost points of $V_2$ would be a part of a $3$-term progression in $V$, and that progression would need to involve a point to the left of $\ell$, which is absurd.

We set $V^\prime=V\smallsetminus\{r\}$, and claim that it is optimal for $3$-term arithmetic progressions.
For any $u < v \in V_1$, we claim that the point $w$ that makes $u < v < w$ a $3$-term arithmetic progression is in $V^\prime$.
For by Lemma \ref{Francis}, $w \in V$, and $w=2 v - u$, with  $v < m$ and $u \geq \ell$, so that $w < 2 m - \ell=r$ (since $\ell < m < r$ is an arithmetic progression).

If $n$ is even, then set $\cV^\prime=\{V_1,V_2\smallsetminus\{r\}\}$, which is a $\leq$-orderly, balanced $2$-decomposition of $V^\prime$, and we see that any pair of points in $V_1$ (resp., $V_2\smallsetminus\{r\}$) are the leftmost (resp., rightmost) two points of a $3$-term progression in $V^\prime$, so $V^\prime$ is optimal by Lemma \ref{Francis}.

If $n$ is odd, let $y$ be the rightmost point of $V_1$, and set $\cW=\{W_1 < W_2\}$ with $W_1=V_1\smallsetminus\{y\}$ and $W_2=V_2\cup\{y\}$, which is a $\leq$-orderly, balanced $2$-decomposition of $V$ with $\card{W_2}>\card{W_1}$.
Then set $\cW^\prime=\{W_1,W_2\smallsetminus\{r\}\}$, which is a $\leq$-orderly, balanced $2$-decomposition of $V^\prime=V\smallsetminus\{r\}$.
By Lemma \ref{Francis}, any pair $v < w$ of points in $W_2$ form the rightmost two points of a $3$-term arithmetic progression in $V$, so any pair $v < w$ of points in $W_2\smallsetminus\{r\}$ for the rightmost two points of a $3$-term arithmetic progression in $V^\prime$.
By the paragraph before the previous one, we know that every pair of points in $W_1=V_1\smallsetminus\{y\}$ form the leftmost two points of a $3$-term arithmetic progression in $V^\prime$.
Thus Lemma \ref{Francis} shows that $V^\prime$ is optimal for $3$-term arithmetic progressions.
\end{proof}
\begin{lemma}\label{minusOneOptimal}
If $V$ is a finite subset of $\R$ that is optimal for $k$-term arithmetic progressions, and $\cV=\{V_1 < \cdots < V_{k-1}\}$ is a $\leq$-orderly, balanced $(k-1)$-decomposition of $V$, then both $\cup_{j=1}^{k-2} V_j$ and $\cup_{j=2}^{k-1} V_j$ are optimal for $(k-1)$-term arithmetic progressions.
\end{lemma}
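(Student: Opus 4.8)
The plan is to invoke Lemma~\ref{Francis} twice: once in the ``only if'' direction applied to $V$, to extract a family of witnessing $k$-term progressions, and once in the ``if'' direction applied to each of the two truncated sets, to recognize them as optimal. Since every pattern on the line is uniquely reconstructible from $\le$-consecutive points (Section~\ref{Larry}), Lemma~\ref{Francis} and Theorem~\ref{George} hold with $k$ replaced by $k-1$, so it will be enough to verify the reconstruction criterion of Lemma~\ref{Francis} for $V'=\bigcup_{j=1}^{k-2}V_j$ and $V''=\bigcup_{j=2}^{k-1}V_j$, each equipped with the $(k-2)$-decomposition inherited by deleting $V_{k-1}$, respectively $V_1$, from $\cV$. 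A preliminary step is the purely arithmetic observation that these inherited $(k-2)$-decompositions are again balanced: writing $n=q(k-1)+r$ with $0\le r<k-1$, a balanced $(k-1)$-decomposition has exactly $r$ blocks of size $q+1$ and $k-1-r$ of size $q$, and deleting an end block leaves a balanced $(k-2)$-decomposition of the resulting set, as a short case check on block sizes confirms.

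For the main point, fix $V'=\bigcup_{j=1}^{k-2}V_j$ with its $(k-2)$-decomposition $\{V_1<\cdots<V_{k-2}\}$, whose $j$th block equals the $j$th block of $\cV$ for $1\le j\le k-2$. Let $j\in\{1,\ldots,k-2\}$ and let $v<w$ lie in $V_j$. As $V$ is optimal for $\cA_k$, Lemma~\ref{Francis} provides an instance $P=\{p_1<\cdots<p_k\}\subseteq V$ of $\cA_k$ with $p_j=v$, $p_{j+1}=w$, that is of no echelon in $\cV$ other than $j$; in particular $P$ is not of echelon $k-1$. The crucial claim is that $p_{k-1}\notin V_{k-1}$: otherwise, since $V_{k-1}$ is the top block of the orderly decomposition and $p_k>p_{k-1}$, the point $p_k$ would lie in no block $V_i$ with $i\le k-2$, hence in $V_{k-1}$, so $P$ would be of echelon $k-1$. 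Therefore $p_{k-1}\in V'$, and since $p_i<p_{k-1}$ for $i<k-1$, none of those points can lie in $V_{k-1}$ either, so $P^\ast:=P\smallsetminus\{p_k\}=\{p_1<\cdots<p_{k-1}\}$ is contained in $V'$. This $P^\ast$ is an instance of $\cA_{k-1}$ with $j$th and $(j+1)$th points $v$ and $w$, it is of echelon $j$ in $\{V_1,\ldots,V_{k-2}\}$, and of no other echelon there (an echelon $i\ne j$ of $P^\ast$ would be an echelon $i$ of $P$ in $\cV$). Hence the reconstruction required by Lemma~\ref{Francis} exists for $V'$; letting $j$, $v$, $w$ vary and invoking Lemma~\ref{Francis} and Theorem~\ref{George} yields $S_{\cA_{k-1}}(V')=S_{\cA_{k-1}}(\card{V'})$, i.e., $V'$ is optimal for $(k-1)$-term arithmetic progressions. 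For $V''=\bigcup_{j=2}^{k-1}V_j$ with the re-indexed decomposition $\{V_2<\cdots<V_{k-1}\}$ the argument is the mirror image: starting from the reconstruction of $\cA_k$ of echelon $j+1$ guaranteed by Lemma~\ref{Francis} for $v,w\in V_{j+1}$, one notes that $p_2\notin V_1$ (otherwise $p_1\in V_1$ too and $P$ would be of echelon $1$), so $\{p_2<\cdots<p_k\}\subseteq V''$, and checks the echelon conditions as before.

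The step I expect to be the main obstacle --- though it is a matter of care rather than of depth --- is the containment $P^\ast\subseteq V'$ and its mirror for $V''$. This is exactly where one must use the full force of the Lemma~\ref{Francis} criterion: knowing only that $P\subseteq V$ is of echelon $j$ is not enough; it is the additional information that $P$ is of \emph{no} other echelon, in particular not of echelon $k-1$ (respectively $1$), that pins down the point to be deleted as the top point $p_k$ (respectively the bottom point $p_1$), keeping the remaining $k-1$ points inside the smaller decomposition. The only other item deserving attention is the combinatorial fact that balancedness is inherited under deletion of an end block, which should be stated and checked explicitly because the hypotheses of Lemma~\ref{Francis} require balanced decompositions.
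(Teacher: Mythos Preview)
Your proof is correct and follows essentially the same approach as the paper's: apply Lemma~\ref{Francis} to $V$ to obtain a $k$-term progression $P$ of echelon~$j$ only, argue that at most one point of $P$ lies in the deleted end block (else $P$ would have a forbidden second echelon), and conclude that the surviving $(k-1)$ points witness the Lemma~\ref{Francis} criterion for the truncated set. You are in fact more careful than the paper on two points it leaves implicit: you explicitly check that the inherited $(k-2)$-decomposition remains balanced, and you verify the ``no other echelon'' clause for $P^\ast$, both of which are needed to invoke Lemma~\ref{Francis} in the ``if'' direction.
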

\begin{proof}
Let $V^\prime=\cup_{j=1}^{k-2}$ with $\leq$-orderly, balanced $(k-2)$-decomposition $\cV^\prime=\{V_1 < \cdots < V_{k-2}\}$.
By Lemma \ref{Francis}, for a pair of points $p$ and $q$ in the subset $V_j$ of $V'$, there is a $k$-term arithmetic progression $P$ in $\cV$ that contains these points and is only of echelon $i$ in $\cV$ for $i=j$. Note that $P' = P \smallsetminus (P \cap V_{k-1})$ is an arithmetic progression of some length that is of echelon $j$ in $\cV^\prime$. 
If $P'$ has fewer than $k-1$ points, then it must be true that more than one point of $P$ is in $V_{k-1}$, which implies that $P$ is both of echelon $k-1$ and echelon $j$ in $\cV$, contradicting Lemma \ref{Francis}.
Hence $P'$ is an arithmetic progression of at least $k-1$ terms contained in $V'$. 
By Lemma \ref{Francis}, $V'$ is optimal for $k-1$-term arithmetic progressions.
By the same reasoning, $\cup_{j=2}^{k-1} V_j$ is also optimal for $(k-1)$-term arithmetic progressions.
\end{proof}
\begin{lemma}\label{Elizabeth}
Let $n \geq k \geq 4$.  Let $V$ be an $n$-subset of $\R$ that is either an arithmetic progression, or else an arithmetic progression with the second point $p$ or the second-to-last point $q$ (or both) removed.  Then $V$ is optimal for $k$-term arithmetic progressions if and only if (i) $V$ is an arithmetic progression or (ii) $k-1$ divides $n$ and $V$ is an arithmetic progression with only one of $p$ or $q$ removed.
\end{lemma}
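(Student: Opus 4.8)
The plan is to compute, for each of the listed shapes of $V$, the exact number of $k$-term arithmetic progressions it contains, and to compare this with $\SAkn$, whose value is already pinned down by Theorem \ref{George}. After a similarity transformation the ambient progression becomes a block of consecutive integers, so that $V$ is, up to reflection, one of: (i) $\{0,1,\ldots,n-1\}$; (ii) $W:=\{0,1,\ldots,n\}\smallsetminus\{1\}$ (the second point deleted; deleting the second-to-last point gives the reflection of $W$, with the same count); or (iii) $\{0,1,\ldots,n+1\}\smallsetminus\{1,n\}$ (both deleted). In each case, counting $k$-term progressions in $V$ amounts to counting the $k$-term progressions of the ambient block that avoid the deleted point(s), so inclusion--exclusion reduces everything to counts inside blocks of consecutive integers.

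I would first record three elementary facts about the block $B_N:=\{0,1,\ldots,N-1\}$ for $N\geq k$. First, by Lemma \ref{Eustace}, $B_N$ contains exactly $g(N):=(N-\rho)(N+\rho-k+1)/(2k-2)$ $k$-term progressions, where $\rho$ is the remainder of $N$ modulo $k-1$; in particular $\SAkn=g(n)$. Second, the number of $k$-term progressions of $B_N$ through the second point $1$ is $\floor{(N-2)/(k-1)}+1$ --- namely $\floor{(N-2)/(k-1)}$ progressions starting at $1$, one per admissible common difference, together with $\{0,1,\ldots,k-1\}$ --- and by reflection the same count holds for progressions through the second-to-last point $N-2$. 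Third, provided $N\geq k+2$, a $k$-term progression of $B_N$ through \emph{both} $1$ and $N-2$ must have these as its first and last terms: the inequalities $p_1\geq0$ and $p_k\leq N-1$ pin down the common difference near each end, and every competing shape would need $N\leq k+1$. Thus there are exactly $[\,k-1\mid N-3\,]$ such progressions, and we apply this with $N=n+2\geq k+2$.

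With these facts, case (i) is immediate: $V=B_n$ contains $g(n)=\SAkn$ progressions, so it is optimal. For case (ii), $W\subseteq B_{n+1}$, so $W$ contains $g(n+1)-\bigl(\floor{(n-1)/(k-1)}+1\bigr)$ progressions; splitting on the remainder $r$ of $n$ modulo $k-1$, this equals $\SAkn$ when $r=0$ and $\SAkn-1$ otherwise, so $W$ (and its reflection) is optimal exactly when $k-1\mid n$. For case (iii), inclusion--exclusion on $B_{n+2}$ gives
\[
g(n+2)-2\bigl(\floor{n/(k-1)}+1\bigr)+[\,k-1\mid n-1\,]
\]
progressions in $V$, and the same remainder-by-remainder computation shows this is always at most $\SAkn-1$; hence deleting both the second and second-to-last point is never optimal. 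Assembling the three cases yields both implications of the lemma.

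I expect the main obstacle to be the arithmetic bookkeeping in the last paragraph. As $N$ runs through $n$, $n+1$, $n+2$, both the remainder $\rho$ and the floor $\floor{(N-2)/(k-1)}$ can jump, so the computation has to be organized according to the residue $r$ of $n$ modulo $k-1$, with extra care at the boundary residues $r=0$ and $r=k-2$, where the top index of the sum defining $g$ shifts. One also has to confirm that the ``sporadic'' two-endpoint progression shapes in the third fact are genuinely ruled out --- this is exactly where the hypotheses $n\geq k$ and $k\geq4$ (equivalently $k-1\geq3$) are used. Everything else is routine algebra.
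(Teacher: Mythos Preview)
Your approach is correct and genuinely different from the paper's. The paper argues via the reconstruction criterion (Lemma~\ref{Francis}): for each non-AP shape it exhibits a specific $\preceq$-orderly balanced $(k-1)$-decomposition and a specific pair of points in some $V_j$ whose reconstruction is forced to land outside $V$, contradicting optimality; for the optimal shapes it appeals to Lemma~\ref{Francis} in the other direction (with the $r=0$ case of $\{0,2,3,\ldots,n\}$ dispatched by an ``it is not difficult'' remark). You instead compute $S_{\cA_k}(V)$ exactly for each shape by inclusion--exclusion on the ambient block $B_N$, using the three elementary counts $g(N)$, $\lfloor (N-2)/(k-1)\rfloor+1$, and $[\,k-1\mid N-3\,]$, and then compare with $S_{\cA_k}(n)=g(n)$ residue by residue. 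Your route is more self-contained---it does not invoke the decomposition machinery at all and actually yields the exact deficits ($g(n)-1$ or $g(n)-2$) in the non-optimal cases---whereas the paper's route keeps the argument within the order-theoretic framework that drives the rest of the article. Both are short; yours has the advantage of filling in the case the paper leaves to the reader, at the cost of the residue bookkeeping you flag (the identities $g(m+1)-g(m)=\lfloor m/(k-1)\rfloor$ and the jumps of $\lfloor (n-1)/(k-1)\rfloor$ at $r=0$ and of $\lfloor (n+1)/(k-1)\rfloor$ at $r=k-2$ organize this cleanly, and $k\ge 4$ guarantees these boundary residues are distinct from $r=1$).
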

\begin{proof}
We can assume, without loss of generality, that $V \subseteq \Z$, that the first element of $V$ is $0$, and the smallest spacing between any two consecutive elements of $V$ is $1$.  Write $n=(k-1)q + r$ with $0 \leq r < k-1$.

We already know from Theorem \ref{George} and Lemma \ref{Eustace} that if $V$ is an arithmetic progression, then it is optimal for $k$-term arithmetic progressions. 

Now we assume that $V$ is obtained from an arithmetic progression by removing only the second point, so that $V=\{0,2,3,\ldots,n\}$.
Assume $V$ is optimal for $k$-term arithmetic progressions.

If $0 < r < k-1$, let $\cV=\{V_1 < \cdots < V_{k-1}\}$ be a $\leq$-orderly, balanced $(k-1)$-decomposition of $V$, such that $V_1=\{0,2,3,\ldots,q+1\}$.
If we consider the $k$-term arithmetic progression beginning with $0$ and $q+1$, its rightmost point is $(k-1)(q+1)=(k-1)q+(k-1) > (k-1) q + r=n$, which must be contained in $V$ by Lemma \ref{Francis}, which is absurd.

On the other hand, if $r=0$, then it is not difficult to use Lemma \ref{Francis} to show that $V=\{0,2,3,\ldots,(k-1)q\}$ is in fact optimal for $k$-term arithmetic progressions.
By symmetry, we have also covered the cases where $V$ is obtained from an arithmetic progression by only removing the second-to-last point.

Now we assume that $v$ is obtained from an arithmetic progression by removing both the second point and the second-to-last point, so that $V=\{0,2,3,\ldots,n-2,n-1,n+1\}$.
Assume $V$ is optimal for $k$-term arithmetic progressions.
Recall that we are writing $n=(k-1)q + r$ with $0 \leq r < k-1$.

If $r=0$, let $\cV=\{V_1 < \cdots < V_{k-1}\}$ be a $\leq$-orderly, balanced $(k-1)$-decomposition of $V$, in which $V_1=\{0,2,3,\ldots,q\}$.
If we consider the $k$-term arithmetic progression beginning with $0$ and $q$, then its rightmost point is $(k-1)q=n$, so Lemma \ref{Francis} says that $n$ must lie in $V$, which is absurd.

If $0 < r < k-1$, let $\cV=\{V_1 < \cdots < V_{k-1}\}$ be a $\leq$-orderly, balanced $(k-1)$-decomposition of $V$, such that $V_1=\{0,2,3,\ldots,q\}$ and $V_2=\{q+1,q+2,\ldots,2 q+1\}$.
If we consider the $k$-term arithmetic progression whose second and third points are $q+1$ and $2 q+1$, then the first point must be $1$, so Lemma \ref{Francis} says that $1$ must lie in $V$, which is absurd.
\end{proof}

\section{Commensurable Patterns in the Line}\label{Rebecca}

Here the universe $U$ is the line $\R$, and our equivalence relation $\sim$ is direct similarity.
A {\it commensurable pattern} $\cP$ in $\R$ is one such that for $P \in \cP$, all the distances between pairs in $P$ are commensurable, that is, are related by rational ratios.
Equivalently, $\cP$ is commensurable if it contains some $P \subseteq \Z$.
Or yet again, $\cP$ is commensurable if its instances are subsets of arithmetic progressions.
Indeed if $P \in \cP$, there is a unique arithmetic progression $A$ of minimum cardinality such that $P \subseteq A$.
We call this $A$ the {\it enveloping arithmetic progression} for $P$, and of course the set of all enveloping arithmetic progressions of elements of $\cP$ is itself a pattern, called the {\it enveloping pattern for $\cP$}.
For a positive $\ell$, we let $\cAl$ be the pattern consisting of $\ell$-term arithmetic progressions.

\begin{theorem}\label{Jacob}
Let $\cP$ be a commensurable $k$-pattern on $\R$, and suppose that $\cAl$ is the enveloping pattern for $\cP$.
If $n \in \N$ and $r$ and $s$ are respectively the remainders when $n$ is divided by $k-1$ and $\ell-1$, then
\[
\frac{(n-s)(n+s-\ell+1)}{2 \ell-2} \leq \SPn \leq \frac{(n-r)(n+r-k+1)}{2 k-2}.
\]
\end{theorem}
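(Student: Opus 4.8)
The plan is to establish the two inequalities independently. The upper bound requires no new work: since $\cP$ is a commensurable $k$-pattern, by definition it contains some $P\subseteq\Z$, which is a $k$-set, so $\cP$ is a genuine $k$-pattern in the line $\R$. As noted in Section \ref{Larry}, every pattern on $\R$ is uniquely reconstructible from $\leq$-consecutive points, so Theorem \ref{Deborah} applies verbatim and gives $\SPn \leq (n-r)(n+r-k+1)/(2k-2)$ with $r$ the remainder of $n$ modulo $k-1$. That is the entire argument for the right-hand inequality.

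For the lower bound I would produce one explicit $n$-point configuration, namely the arithmetic progression $V=\{0,1,\ldots,n-1\}\subseteq\R$, and show that each $\ell$-term arithmetic progression inside $V$ can be turned into an instance of $\cP$ inside $V$, with different progressions yielding different instances. First fix a representative $P\in\cP$ with $P\subseteq\Z$; its enveloping arithmetic progression is some $\ell$-term progression $A_0$, and applying the direct similarity that sends $A_0$ onto $\{0,1,\ldots,\ell-1\}$ in increasing order replaces $P$ by a set $P_0\in\cP$ whose enveloping progression is exactly $\{0,1,\ldots,\ell-1\}$; in particular $P_0\subseteq\{0,1,\ldots,\ell-1\}$. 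Now, given any $\ell$-term arithmetic progression $A=\{a,a+d,\ldots,a+(\ell-1)d\}\subseteq V$ with $d>0$, the affine map $f_A\colon x\mapsto a+dx$ is a direct similarity of $\R$ carrying $\{0,1,\ldots,\ell-1\}$ onto $A$, so $P_A:=f_A(P_0)$ is directly similar to $P_0$, hence belongs to $\cP$, and $P_A\subseteq A\subseteq V$. Since enveloping progressions transform equivariantly under similarities, the enveloping progression of $P_A$ is $f_A(\{0,1,\ldots,\ell-1\})=A$; because the enveloping progression is unique, the assignment $A\mapsto P_A$ is injective. Consequently $\SPV$ is at least the number of $\ell$-term arithmetic progressions contained in $\{0,1,\ldots,n-1\}$, and by Lemma \ref{Eustace} (applied with $\ell$ in place of $k$) that number is $(n-s)(n+s-\ell+1)/(2\ell-2)$, where $s$ is the remainder of $n$ modulo $\ell-1$. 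Since $\SPn\geq\SPV$, the left-hand inequality follows.

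The proof contains no serious obstacle; the only points that need care are in the lower bound: choosing $d>0$ and $a=\min A$ so that $f_A$ really is a \emph{direct} similarity and not a reflection, justifying the normalization of $P_0$ against its own enveloping progression, and --- the key observation that makes the count exact --- noting that $A$ is recovered as the enveloping progression of $P_A$, which is precisely what forces $A\mapsto P_A$ to be injective and lets the exact enumeration of Lemma \ref{Eustace} pass through to a lower bound on $\SPV$.
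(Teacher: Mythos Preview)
Your proof is correct and follows essentially the same approach as the paper: the upper bound comes from the general bound for $k$-patterns on $\R$ (the paper cites Lemma~\ref{Francis}, but Theorem~\ref{Deborah} is the more apt reference, as you use), and the lower bound comes from the observation that each $\ell$-term arithmetic progression is the enveloping progression of a unique instance of $\cP$ contained in it, so $S_\cP(V)\geq S_{\cAl}(V)$.  Your version is more explicit about the injection $A\mapsto P_A$ and works in the single test set $\{0,1,\ldots,n-1\}$ via Lemma~\ref{Eustace}, whereas the paper asserts the comparison in every $V$ and then quotes $\SAln$; the substance is the same.
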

\begin{proof}
The right side of the inequality follows directly from Lemma \ref{Francis}, which gives an upper bound on the maximum number of instances of a $k$-pattern $\cP$ in an $n$-subset of $\R$.

Now let $A=\{p \leq \cdots \leq q\}$ and $A'=\{p'\leq \cdots\leq q'\}$ be distinct $\ell$-term arithmetic progressions in $R$. Then they are enveloping arithmetic progressions for distinct instances of the commensurable pattern $\cP$: $P=\{p \leq \cdots \leq q\}$ and $P'=\{p'\leq \cdots \leq q'\}$. We see that in a given set, there are at least as many instances of $\cP$ as there are $\ell$-term arithmetic progressions, so that $\SAln = (n-s)(n+s-\ell+1)/(2 \ell-2) \leq \SPn$.
\end{proof}

We now explore the commensurable pattern $\cP$ containing $\{0,1,3\}$. The enveloping pattern is $\cAfour$. We know from Theorem \ref{Jacob} that $\SAfourn  \leq \SPn \leq \SAthreen$. Note from Theorem \ref{George} that $\lim_{n\to\infty} \SAfourn/n^2 = 1/6$ and $\lim_{n\to\infty} \SAthreen/n^2=1/4$.

We now construct a family of sets $V_n$ where each $V_n$ is a set with $n$ points containing $\SPVn=(3 n^2-8 n)/16$ directly similar copies of $\{0,1,3\}$.
This makes $\lim_{n \to \infty} \SPVn/n^2=3/16$, which is strictly between the two limits we computed in the previous paragraph using the lower and upper bounds on $\SPVn$ furnished by Theorem \ref{Jacob}.
We now describe our construction $V_n$, and then prove our claim that $\SPVn=(3 n^2-8 n)/16$.
\begin{construction}\label{Mary}
If $n=96 k$ for any positive integer $k$, we let $V_n$ be the union of the following four sets:
\begin{align*}
M_0 & = 6 \Z \cap [0,108 k], \\
M_1 & = (1+6 \Z) \cap [72 k+1,144 k -5], \\
M_3 & = (3+6 \Z) \cap [3,324 k -9], \text{and} \\
M_5 & = (5+6 \Z) \cap [72 k-1,144 k -7],
\end{align*}
or equivalently
\begin{align*}
M_0 & = \{6 a: 0 \leq a \leq 18 k\},\\
M_1 & = \{6 a + 1 : 12 k \leq a \leq 24 k-1\},\\
M_3 & = \{6 a + 3 : 0 \leq a \leq 54 k -2\}, \text{and} \\
M_5 & = \{6 a - 1 : 12 k \leq a \leq 24 k-1\}.
\end{align*}
\end{construction}
We show the smallest example of this construction in Figure \ref{David}, with points drawn as vertical strokes to make them easier to discern.
\FloatBarrier
\begin{center}
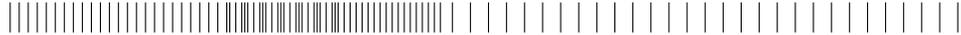
\begin{figure}[ht!]
\begin{center}
\begin{tikzpicture}[scale=0.04]
\draw (0,-5)--(0,5);
\draw (6,-5)--(6,5);
\draw (12,-5)--(12,5);
\draw (18,-5)--(18,5);
\draw (24,-5)--(24,5);
\draw (30,-5)--(30,5);
\draw (36,-5)--(36,5);
\draw (42,-5)--(42,5);
\draw (48,-5)--(48,5);
\draw (54,-5)--(54,5);
\draw (60,-5)--(60,5);
\draw (66,-5)--(66,5);
\draw (72,-5)--(72,5);
\draw (78,-5)--(78,5);
\draw (84,-5)--(84,5);
\draw (90,-5)--(90,5);
\draw (96,-5)--(96,5);
\draw (102,-5)--(102,5);
\draw (108,-5)--(108,5);
\draw (73,-5)--(73,5);
\draw (79,-5)--(79,5);
\draw (85,-5)--(85,5);
\draw (91,-5)--(91,5);
\draw (97,-5)--(97,5);
\draw (103,-5)--(103,5);
\draw (109,-5)--(109,5);
\draw (115,-5)--(115,5);
\draw (121,-5)--(121,5);
\draw (127,-5)--(127,5);
\draw (133,-5)--(133,5);
\draw (139,-5)--(139,5);
\draw (3,-5)--(3,5);
\draw (9,-5)--(9,5);
\draw (15,-5)--(15,5);
\draw (21,-5)--(21,5);
\draw (27,-5)--(27,5);
\draw (33,-5)--(33,5);
\draw (39,-5)--(39,5);
\draw (45,-5)--(45,5);
\draw (51,-5)--(51,5);
\draw (57,-5)--(57,5);
\draw (63,-5)--(63,5);
\draw (69,-5)--(69,5);
\draw (75,-5)--(75,5);
\draw (81,-5)--(81,5);
\draw (87,-5)--(87,5);
\draw (93,-5)--(93,5);
\draw (99,-5)--(99,5);
\draw (105,-5)--(105,5);
\draw (111,-5)--(111,5);
\draw (117,-5)--(117,5);
\draw (123,-5)--(123,5);
\draw (129,-5)--(129,5);
\draw (135,-5)--(135,5);
\draw (141,-5)--(141,5);
\draw (147,-5)--(147,5);
\draw (153,-5)--(153,5);
\draw (159,-5)--(159,5);
\draw (165,-5)--(165,5);
\draw (171,-5)--(171,5);
\draw (177,-5)--(177,5);
\draw (183,-5)--(183,5);
\draw (189,-5)--(189,5);
\draw (195,-5)--(195,5);
\draw (201,-5)--(201,5);
\draw (207,-5)--(207,5);
\draw (213,-5)--(213,5);
\draw (219,-5)--(219,5);
\draw (225,-5)--(225,5);
\draw (231,-5)--(231,5);
\draw (237,-5)--(237,5);
\draw (243,-5)--(243,5);
\draw (249,-5)--(249,5);
\draw (255,-5)--(255,5);
\draw (261,-5)--(261,5);
\draw (267,-5)--(267,5);
\draw (273,-5)--(273,5);
\draw (279,-5)--(279,5);
\draw (285,-5)--(285,5);
\draw (291,-5)--(291,5);
\draw (297,-5)--(297,5);
\draw (303,-5)--(303,5);
\draw (309,-5)--(309,5);
\draw (315,-5)--(315,5);
\draw (77,-5)--(77,5);
\draw (83,-5)--(83,5);
\draw (89,-5)--(89,5);
\draw (95,-5)--(95,5);
\draw (101,-5)--(101,5);
\draw (107,-5)--(107,5);
\draw (113,-5)--(113,5);
\draw (119,-5)--(119,5);
\draw (125,-5)--(125,5);
\draw (131,-5)--(131,5);
\draw (137,-5)--(137,5);
\draw (143,-5)--(143,5);
\end{tikzpicture}
\caption{Construction \ref{Mary} with $k=1$ (points drawn as vertical strokes)}\label{David}
\end{center}
\end{figure}
\end{center}
\begin{proposition}
If $\cP$ is the pattern containing $\{0,1,3\}$, and $V_n$ is the $n$-point set $V_n$ described in Construction \ref{Mary}, then $\SPVn=(3 n^2-8 n)/16$.
\end{proposition}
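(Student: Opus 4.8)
The plan is to count the instances of $\cP$ in $V_n$ directly, organising everything by residues modulo $6$. First recall that a direct similarity of the line is a map $x\mapsto ax+b$ with $a>0$, so an instance of $\cP$ is exactly a set $\{x,x+t,x+3t\}$ with $t>0$; writing its three points as $p<q<r$ this says $r-q=2(q-p)$, i.e.\ $r=3q-2p$. Since $3q-2p>q$ whenever $p<q$, and $3q-2p\notin\{p,q\}$ when $p\neq q$, I get
\[
\SPVn=\card{\{(p,q)\in V_n\times V_n:\ p<q\ \text{and}\ 3q-2p\in V_n\}},
\]
so the task is to count pairs $p<q$ in $V_n$ whose ``completion'' $3q-2p$ also lies in $V_n$.

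Now use the structure of $V_n$. By Construction~\ref{Mary}, $V_n=M_0\sqcup M_1\sqcup M_3\sqcup M_5$ is a disjoint union of four arithmetic progressions of common difference $6$, with $M_i$ in the residue class $i\pmod 6$ (for $M_5$ I write its elements as $6j-1$). If $p\in M_i$ and $q\in M_j$, then $3q-2p$ lies in a residue class mod $6$ depending only on $i,j$; computing $3\cdot(q\bmod 6)-2\cdot(p\bmod 6)$ for $i,j\in\{0,1,3,5\}$ shows this residue lies in $\{0,1,3,5\}$ for exactly $14$ of the $16$ ordered pairs $(i,j)$, and in each such case it pins down the unique class $l$ with $3q-2p\in M_l$ (whenever $3q-2p\in V_n$ at all). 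This sorts the instances of $\cP$ in $V_n$ into $14$ \emph{types} $(M_i,M_j,M_l)$: the four ``internal'' types $(M_i,M_i,M_i)$ and ten ``cross'' types.

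I would then evaluate each type as a lattice-point count. Fix a type, write $p=6\alpha+\epsilon_i$, $q=6\beta+\epsilon_j$ with $\alpha,\beta$ in the index intervals $J_i,J_j$ of $M_i,M_j$ (so $J_0=[0,18k]$, $J_1=J_5=[12k,24k-1]$, $J_3=[0,54k-2]$). Because the type was chosen so that $3\epsilon_j-2\epsilon_i\equiv\epsilon_l\pmod6$, the completion is $3q-2p=6\gamma+\epsilon_l$ with $\gamma$ an \emph{integer} affine function of $(\alpha,\beta)$, so ``$3q-2p\in M_l$'' is simply ``$\gamma\in J_l$''. Hence the number of instances of this type equals the number of integer points $(\alpha,\beta)$ satisfying $\alpha\in J_i$, $\beta\in J_j$, $\gamma(\alpha,\beta)\in J_l$ and $p<q$ — the lattice points of a convex polygon (a triangle or trapezoid), yielding an explicit quadratic in $k$. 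For the four internal types this is the familiar $\sum_{s\ge1}\max(0,\card{J_i}-3s)$ arising from copies $\{\alpha,\alpha+s,\alpha+3s\}$ inside $J_i$, which sum to $588k^2-60k+1$. Some symmetry cuts the cross-type work: $M_5=M_1-2$, which makes several cross counts coincide, and the three types sharing a given ``outer'' progression can be grouped.

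Finally I would add the fourteen contributions. The four internal counts give $588k^2-60k+1$; the ten cross counts are then found to contribute the remaining $1140k^2+12k-1$, so the total is $1728k^2-48k=(3n^2-8n)/16$ with $n=96k$, as claimed. The hard part is entirely the bookkeeping: there is no single clever trick, just the disciplined evaluation of fourteen polygon counts with \emph{all} lower-order terms tracked exactly — in particular, the $+1$ from the internal $M_3$ count must cancel a $-1$ coming from the cross types, so one must be careful with the boundary behaviour (empty $\alpha$-ranges at the ends of the intervals, and the ceilings and floors produced by the constraint $\gamma\le\max J_l$). A useful built-in check is that the leading coefficients alone already sum to $1728k^2=\tfrac3{16}n^2$, matching the limit $3/16$ announced for $\SPVn/n^2$; pinning down the linear term, and verifying the $14$-type table is both exhaustive and free of double counting, is where the real work is.
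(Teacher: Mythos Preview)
Your proposal is correct and follows essentially the same approach as the paper: both organise the count by the residues of $(p,q,r)$ modulo $6$, reduce to $14$ surviving types (discarding the two that would require a point congruent to $2$ or $4$), express each type as a lattice-point count in a polygonal region of $\Z^2$, and sum to $1728k^2-48k$. Your additional remarks (the symmetry $M_5=M_1-2$, the explicit internal total $588k^2-60k+1$, and the observation that the $+1$ from $(3,3,3)$ must cancel a $-1$ from the cross types) are nice organisational refinements, but the method is the same.
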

\begin{proof}
To compute $\SPVn$ we first note that by choosing the congruence classes modulo $6$ of the first two points of any $P \in \cP$, the class of the third point of $P$ is fixed.
For example, if we choose the first two points congruent to $5 \pmod{6}$, then the third point must also be of the same class. 
Hence the $4 \times 4$ cases for the respective congruence classes of the three points are $(0,0,0)$, $(0,1,3)$, $(0,3,3)$, $(0,5,3)$, $(1,0,4)$, $(1,1,1)$, $(1,3,1)$, $(1,5,1)$, $(3,0,0)$, $(3,1,3)$, $(3,3,3)$, $(3,5,3)$, $(5,0,2)$, $(5,1,5)$, $(5,3,5)$, and $(5,5,5)$. 
Henceforth we omit $(1,0,4)$ and $(5,0,2)$, as none of the points in $V_n$ are congruent to $4$ or to $2 \pmod{6}$.

To compute the number of instances $\{p_1 < p_2 < p_3\}$ of $\cP$ in $V_n$ with $(p_1,p_2,p_3) \equiv (a,b,c) \pmod{6}$ for a given triple $(a,b,c)$, one counts lattice points in regions of $\Z^2$ defined by a system of inequalities.
For example, if $n=96 k$ and $(a,b,c)=(0,5,3)$, we could represent each instance $\{p_1 < p_2 < p_3\}$ of $\cP$ with 
\begin{align*}
p_1 & =6 x \\
p_2 & =6 y-1 \\
p_3 & = 3 p_2-2 p_1=6(3 x - 2 y-1)+3,
\end{align*}
subject to the inequalities

\begin{align*}
x &<   y, \\
0 &\leq  x \leq 18 k, \\
12 k & \leq y \leq 24 k - 1,\text{ and} \\
0 & \leq 3 y-2 x - 1 \leq 54 k - 2.
\end{align*}

The first inequality makes sure that $p_1 < p_2$, and the remaining
three inequalities make sure that $p_1$, $p_2$, and $p_3$ are in the
ranges prescribed for their respective congruence classes, as
described in Construction \ref{Mary}.

In this way we calculate the following number of instances $\{p_1 <
p_2 < p_3\}$ of $\cP$ in $V_n$. The results for every instance with
the corresponding triple $(p_1,p_2,p_3)\pmod 6$ are described in the
following table:
\begin{center}
\begin{tabular}{|c|c|}
  \hline
  number of
  instances &  $(p_1,p_2,p_3)\pmod 6$ \\ \hline
  $54 k^2 -3 k$ & $(0,0,0)$ \\
  $171 k^2 + 6 k$ & $(0,1,3)$ \\
  $270 k^2 + 9 k$ & $(0,3,3)$ \\
  $171 k^2 + 3 k -1$ & $(0,5,3)$ \\
  $24 k^2 -6 k$ & $(1,1,1)$ \\
  $24 k^2 + 2 k$ & $(1,3,1)$ \\
  $24 k^2 -2 k$ & $(1,5,1)$ \\
  $54 k^2 + 3 k$ & $(3,0,0)$ \\
  $189 k^2 -6 k$ & $(3,1,3)$ \\
  $486 k^2 -45 k + 1$ & $(3,3,3)$ \\
  $189 k^2 -3 k$ & $(3,5,3)$ \\
  $24 k^2 + 2 k$ & $(5,1,5)$ \\
  $24 k^2 -2 k$ & $(5,3,5)$ \\
  $24 k^2 -6 k$ & $(5,5,5)$ \\
  \hline
\end{tabular}
\end{center}
If we add up all the counts, we get $1728 k^2 - 48 k$, or equivalently $(3 n^2-8 n)/16$ for the the total number of instances of $\cP$ in $V_n$. 
\end{proof}

\section{Equilateral Triangles in the Plane}\label{Eric}

In this section we explore $\SEn$, where $U$ is the Euclidean plane (identified with $\C$), $V$ is a finite subset, and $\cE$ is the pattern of vertices of an equilateral triangle.
Our order relation on $\C$ is lexicographic ordering: $y \prec z$ means that either (i) $\Re(y) < \Re(z)$ or (ii) $\Re(y)=\Re(z)$ and $\Im(y) < \Im(z)$.
This order relation respects addition, so if $y \prec y^\prime$ and $z \preceq z^\prime$, then $y+z \prec y^\prime+z^\prime$.

If $u$ and $v$ are distinct points in $\C$, they are vertices of only two equilateral triangles, and if we let $w$ and $w^\prime$ be the respective third vertices of these two triangles, then $u$, $w$, $v$, and $w^\prime$ are vertices of a parallelogram with center $(u+v)/2=(w+w^\prime)/2$, as shown in Figure \ref{Edwin} below.
\begin{center}
\begin{figure}[ht!]
\begin{center}
\begin{tikzpicture}
\node (u) at  (-0.52, 1.94) {};
\node (v) at  ( 0.52,-1.94) {};
\node (w) at  ( 3.34, 0.9) {};
\node (wp) at (-3.34,-0.9) {};
\node (c) at (0,0) {};
\draw (-0.52,1.94)--(3.34,0.9)--(0.52,-1.94)--(-3.34,-0.9)--(-0.52,1.94);
\draw (-0.52,1.94)--(0.52,-1.94);
\draw [dashed] (3.34,0.9)--(-3.34,-0.9);
\fill (u) circle [radius=2pt];
\fill (v) circle [radius=2pt];
\fill (w) circle [radius=2pt];
\fill (wp) circle [radius=2pt];
\fill (c) circle [radius=2pt];
\node (Lu) at  (-0.52, 2.24) {$u$};
\node (Lv) at  ( 0.52,-2.24) {$v$};
\node (Lw) at  ( 3.64, 0.9) {$w$};
\node (Lwp) at (-3.64,-0.9) {$w'$};
\node (Lc) at (0.85,-0.15) {{\tiny $(u+v)/2=$}};
\node (Lcc) at (1.05,-0.45) {{\tiny $(w+w')/2$}};
\end{tikzpicture}
\caption{Points $u$ and $v$ are vertices of two equilateral triangles, $\bigtriangleup u v w$ and $\bigtriangleup u v w'$, which together form a parallelogram with center at $(u+v)/2=(w+w')/2$.}\label{Edwin}
\end{center}
\end{figure}
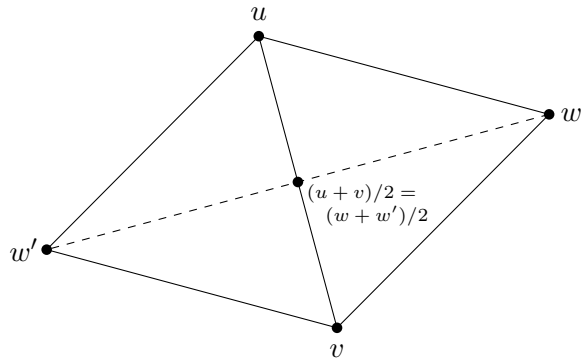
\end{center}
Then the compatibility of our order relation with addition shows that $\cE$ admits at most one reconstruction from any pair of consecutive points, for neither $u, v \prec w, w^\prime$ nor $u, v \succ w, w^\prime$ is consistent with $(u+v)/2=(w+w^\prime)/2$.

Since $\cE$ admits at most one reconstruction from any pair of consecutive points, Theorem \ref{Beatrice} immediately tells us that $\SEn \leq n(n-2)/4$ if $n$ is even and $\SEn \leq (n-1)^2/4$ if $n$ is odd, that is, $\SEn \leq \floor{(n-1)^2/4}$ in any case.
This observation recovers upper bound on $\SEn$ of \cite[Theorem 1]{Abrego-Fernandez-2000} (given here in \eqref{William}), which was previously obtained using a  much more sophisticated geometric proof.

We shall get an improved upper bound on $\SEn$ by noting that sometimes $\cE$ admits no reconstruction from a pair of consecutive points.
To see this, we define the usual principal value of the argument function $\Arg \colon \C^* \to (-\pi,\pi]$.
\begin{lemma}\label{Jacqueline}
Let $u,v$ be distinct points in $\C$.  Then these are the first two vertices (under $\preceq$) of precisely one equilateral triangle and last two vertices of precisely one equilateral triangle if $\Arg(v-u) \in (-5\pi/6,-\pi/6] \cup (\pi/6,5\pi/6]$.  Otherwise, they are neither the first two nor the last two vertices of any equilateral triangle.
\end{lemma}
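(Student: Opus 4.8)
The plan is to reduce to the case $u \prec v$ and then carry out one short computation with arguments. First I would note that the set $S := (-5\pi/6,-\pi/6] \cup (\pi/6,5\pi/6]$ and its complement in $(-\pi,\pi]$ are invariant under $\theta \mapsto \theta \pm \pi$, and that being the first (or last) two vertices of an equilateral triangle is symmetric in $u$ and $v$; so without loss of generality $u \prec v$. Writing $z = v-u$, this means $\theta := \Arg(z) \in (-\pi/2,\pi/2]$, and $S \cap (-\pi/2,\pi/2] = (-\pi/2,-\pi/6] \cup (\pi/6,\pi/2]$. The two equilateral triangles on $\{u,v\}$ have third vertices $w = u + z e^{i\pi/3}$ and $w' = u + z e^{-i\pi/3}$ (rotate $v$ about $u$ by $\pm\pi/3$), and, as already observed just before the lemma via $(u+v)/2 = (w+w')/2$, the pair $\{u,v\}$ can be the first two vertices of at most one of these triangles and the last two of at most one; so the only issue is deciding when there is exactly one versus none.

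The key computation uses $e^{i\pi/3}-1 = e^{i2\pi/3}$ (and its conjugate) to get $w-v = z e^{i2\pi/3}$, $w-u = z e^{i\pi/3}$, $w'-v = z e^{-i2\pi/3}$, $w'-u = z e^{-i\pi/3}$. Since $w$ and $w'$ are reflections of each other through the midpoint $(u+v)/2$, and reflection through a point reverses $\preceq$, we have $v \prec w \iff w' \prec u$ and $v \prec w' \iff w \prec u$, so it suffices to locate the $\theta$ for which $v \prec w$ and for which $v \prec w'$. Using $\xi \succ 0 \iff \Arg\xi \in (-\pi/2,\pi/2]$ and reducing $\theta \pm 2\pi/3$ into $(-\pi,\pi]$, one finds $z e^{i2\pi/3} \succ 0 \iff \theta \in (-\pi/2,-\pi/6]$ and $z e^{-i2\pi/3} \succ 0 \iff \theta \in (\pi/6,\pi/2]$; hence $v \prec w \iff \theta \in (-\pi/2,-\pi/6]$ and $v \prec w' \iff \theta \in (\pi/6,\pi/2]$, and by the reflection $w' \prec u$ (resp.\ $w \prec u$) on the same two intervals.

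These two intervals are disjoint, which re-confirms the ``at most one'' statements, and together they account for exactly $S \cap (-\pi/2,\pi/2]$. Thus, if $\theta \in (-\pi/2,-\pi/6]$ then $u \prec v \prec w$ and $w' \prec u \prec v$, so $\{u,v\}$ are the first two vertices of the triangle with vertices $u,v,w$ and the last two vertices of the triangle with vertices $u,v,w'$; if $\theta \in (\pi/6,\pi/2]$ the roles of $w$ and $w'$ are exchanged; and if $\theta \in (-\pi/6,\pi/6]$ then neither third vertex exceeds $v$ or precedes $u$, so $\{u,v\}$ are neither the first nor the last two vertices of any equilateral triangle. Since $\theta \in S$ (equivalently $\Arg(v-u) \in S$) captures precisely the first two cases, this is the claim. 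The one delicate point is the half-open-interval bookkeeping---assigning the boundary directions (odd multiples of $\pi/6$) to the correct side---but this is forced once one fixes $\Arg$ to take values in $(-\pi,\pi]$ and notes $\xi \succ 0 \iff \Arg\xi \in (-\pi/2,\pi/2]$; the rest is routine trigonometry.
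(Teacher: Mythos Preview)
Your proof is correct and follows essentially the same approach as the paper's: both arguments compute the third vertices as rotations of $v-u$ by sixth roots of unity and then read off, from the argument of the result, when the lexicographic inequalities hold. Your version is organized slightly differently---you reduce at the outset to $u\prec v$ (using the $\theta\mapsto\theta\pm\pi$ symmetry of $S$) and exploit the midpoint reflection $w+w'=u+v$ to handle the ``last two vertices'' case, which lets you check only the single inequality $v\prec w$ (resp.\ $v\prec w'$) rather than both $u\prec w$ and $v\prec w$ as the paper does---but the underlying computation and the endpoint bookkeeping are the same.
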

One can visualize the result of Lemma \ref{Jacqueline}: there are always two equilateral triangles with vertices at $u$ and $v$, but as we vary $\Arg(v-u)$ in Figure \ref{Fiona} below, we can see which arguments make $u$ and $v$ the first and second points (under our lexicographic ordering) of one of these equilateral triangles.  Equilateral triangles for which $u$ and $v$ are the first and second points are drawn in black, while the rest are drawn in grey.
\begin{center}
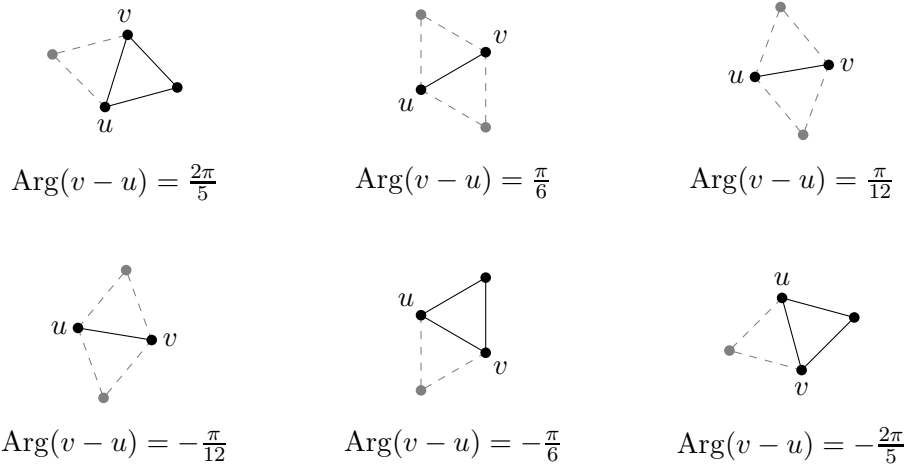
\begin{figure}[ht!]
\begin{center}
\begin{tikzpicture}
\node (u1) at  (-4.63,3.02) {};
\node (v1) at  (-4.33,3.98) {};
\node (w1) at  (-3.67,3.28) {};
\node (wp1) at (-5.33,3.72) {};
\draw [gray,dashed] (-4.63,3.02)--(-5.33,3.72)--(-4.33,3.98);
\draw (-4.63,3.02)--(-3.67,3.28)--(-4.33,3.98)--(-4.63,3.02);
\fill (u1) circle [radius=2pt];
\fill (v1) circle [radius=2pt];
\fill (w1) circle [radius=2pt];
\fill (wp1) [gray] circle [radius=2pt];
\node (Lu1) at  (-4.63,2.77) {$u$};
\node (Lv1) at  (-4.37,4.23) {$v$};
\node (arg1) at (-4.5,2.02) {$\Arg(v-u)=\frac{2\pi}{5}$};
\node (u2) at  (-0.43,3.25) {};
\node (v2) at  ( 0.43,3.75) {};
\node (w2) at  ( 0.43,2.75) {};
\node (wp2) at (-0.43,4.25) {};
\draw [gray,dashed] (-0.43,3.25)--(-0.43,4.25)--(0.43,3.75);
\draw [gray,dashed] (-0.43,3.25)--(0.43,2.75)--(0.43,3.75);
\draw (-0.43,3.25)--(0.43,3.75);
\fill (u2) circle [radius=2pt];
\fill (v2) circle [radius=2pt];
\fill (w2) [gray] circle [radius=2pt];
\fill (wp2) [gray] circle [radius=2pt];
\node (Lu2) at  (-0.63,3.05) {$u$};
\node (Lv2) at  ( 0.63,3.95) {$v$};
\node (arg2) at (0.0,2.02) {$\Arg(v-u)=\frac{\pi}{6}$};
\node (u3) at  (4.01,3.42) {};
\node (v3) at  (4.99,3.58) {};
\node (w3) at  (4.65,2.65) {};
\node (wp3) at (4.35,4.35) {};
\draw [gray,dashed] (4.01,3.42)--(4.35,4.35)--(4.99,3.58);
\draw [gray,dashed] (4.01,3.42)--(4.65,2.65)--(4.99,3.58);
\draw (4.01,3.42)--(4.99,3.588);
\fill (u3) circle [radius=2pt];
\fill (v3) circle [radius=2pt];
\fill (w3) [gray] circle [radius=2pt];
\fill (wp3) [gray] circle [radius=2pt];
\node (Lu3) at  (3.76,3.42) {$u$};
\node (Lv3) at  (5.24,3.58) {$v$};
\node (arg3) at (4.5,2.02) {$\Arg(v-u)=\frac{\pi}{12}$};
\node (u4) at  (-4.99, 0.08) {};
\node (v4) at  (-4.01,-0.08) {};
\node (w4) at  (-4.35, 0.85) {};
\node (wp4) at (-4.65,-0.85) {};
\draw [gray,dashed] (-4.99,0.08)--(-4.65,-0.85)--(-4.01,-0.08);
\draw [gray,dashed] (-4.99,0.08)--(-4.35,0.85)--(-4.01,-0.08);
\draw (-4.99,0.08)--(-4.01,-0.08);
\fill (u4) circle [radius=2pt];
\fill (v4) circle [radius=2pt];
\fill (w4) [gray] circle [radius=2pt];
\fill (wp4) [gray] circle [radius=2pt];
\node (Lu4) at  (-5.24, 0.08) {$u$};
\node (Lv4) at  (-3.76,-0.08) {$v$};
\node (arg4) at (-4.5,-1.48) {$\Arg(v-u)=-\frac{\pi}{12}$};
\node (u5) at  (-0.43, 0.25) {};
\node (v5) at  ( 0.43,-0.25) {};
\node (w5) at  ( 0.43, 0.75) {};
\node (wp5) at (-0.43,-0.75) {};
\draw [gray,dashed] (-0.43,0.25)--(-0.43,-0.75)--(0.43,-0.25);
\draw (-0.43,0.25)--(0.43,0.75)--(0.43,-0.25)--(-0.43,0.25);
\fill (u5) circle [radius=2pt];
\fill (v5) circle [radius=2pt];
\fill (w5) circle [radius=2pt];
\fill (wp5) [gray] circle [radius=2pt];
\node (Lu5) at  (-0.63, 0.45) {$u$};
\node (Lv5) at  ( 0.63,-0.45) {$v$};
\node (arg5) at (0,-1.48) {$\Arg(v-u)=-\frac{\pi}{6}$};
\node (u6) at  (4.37, 0.48) {};
\node (v6) at  (4.63,-0.48) {};
\node (w6) at  (5.33, 0.22) {};
\node (wp6) at (3.67,-0.22) {};
\draw [gray,dashed] (4.37,0.48)--(3.67,-0.22)--(4.63,-0.48);
\draw (4.37,0.48)--(5.33,0.22)--(4.63,-0.48)--(4.37,0.48);
\fill (u6) circle [radius=2pt];
\fill (v6) circle [radius=2pt];
\fill (w6) circle [radius=2pt];
\fill (wp6) [gray] circle [radius=2pt];
\node (Lu6) at  (4.37, 0.73) {$u$};
\node (Lv6) at  (4.63,-0.73) {$v$};
\node (arg6) at (4.5,-1.48) {$\Arg(v-u)=-\frac{2\pi}{5}$};
\end{tikzpicture}
\caption{Equilateral triangles with vertices at $u$ and $v$: in solid black are those that make $u$ and $v$ the least two vertices under lexicographical ordering $\preceq$.}\label{Fiona}
\end{center}
\end{figure}
\end{center}
\noindent{\it Proof of Lemma \ref{Jacqueline}:}
Let $\zeta_6=e^{\pi i/3}$.
Then $u$ and $v$ are vertices of only two equilateral triangles, with the third point being either (i) $w=\zeta_6 u+\conj{\zeta_6} v$ or (ii) $w^\prime=\conj{\zeta_6} u + \zeta_6 v$.

Note that $u, v \prec w$ requires $\Re(w) \geq \Re(u)$ (with $\Im(w) > \Re(u)$ in case of equality) and $\Re(w) \geq \Re(v)$ (with $\Im(w) > \Im(v)$ in case of equality).
Since $\zeta_6+\conj{\zeta_6}=1$, this is equivalent to $\Re(\conj{\zeta_6}(v-u)) \geq 0$ and $\Re(-\zeta_6(v-u)) \geq 0$, with the corresponding imaginary parts strictly positive in cases of equality.
These conditions are fulfilled if and only if $\Arg(v-u) \in (\pi/6,5\pi/6]$.
One can similarly show that $u, v \succ w^\prime$ under the same conditions, and that $u, v \prec w^\prime$ if and only if $\Arg(v-u) \in (-5\pi/6,-\pi/6]$, with $u, v \succ w$ under the same conditions.\hfill\qedsymbol
\vskip 5mm
Let $A=(-5\pi/6,-\pi/6] \cup (\pi/6,5\pi/6]$, the set of values of $\Arg(v-u)$ that admit a reconstruction of $\cE$ with $u$ and $v$ as consecutive points.
To get a good upper bound for $\SEV$ in an $n$-set $V$, we rotate $V$ in such a way that the first $\ceil{n/2}$  points of $V$ (with respect to ordering $\preceq$) have many pairs $(u, v)$ of points with $\Arg(v-u) \not\in A$, and the last $\floor{n/2}$ points of $V$ have many pairs $(u,v)$ of points with $\Arg(u-v) \not\in A$.
Thus when we decompose our set $V$ into a balanced $\preceq$-orderly $2$-decomposition $\cV=\{V_1,V_2\}$, many of the pairs in $V_1$ admit no reconstruction of which they are the first two points, and likewise may of the pairs in $V_2$ admit no reconstruction of which they are the last two points, so that we may deduct the counts of these ``unproductive'' pairs from the standard upper bound of $\floor{(n-1)^2/4}$ implied by Theorem \ref{Beatrice}.
This requires some care, for as we rotate, points change from being among the first $\ceil{n/2}$ points of $V$ to being among the last $\floor{n/2}$ points of $V$ and vice-versa, that is, our decomposition $\cV=\{V_1,V_2\}$ must change (points migrate between $V_1$ and $V_2$) as we rotate $V$.
We now indicate roughly how we overcome this difficulty, with technical details given later.  We eventually shall show that, up to translation and rotation, we may assume that (i) approximately half of the points in $V$ lie on each side of the $y$-axis, and (ii) approximately half of the points in $V$ lie on each side of the line $y=x/\sqrt{3}$, and (iii) approximately half of the points in $V$ lie on each side of the line $y=-x/\sqrt{3}$.  These three lines cut the plane into six compartments, and we call pairs of points lying within the same compartment {\it intracompartmental pairs}.  We then rotate this picture about the origin by angles of $0$, $2\pi/3$, and $4\pi/3$.  Each intracompartmental pair $(u,v)$ will have $\Arg(v-u) \in A$ for two of these rotations, and $\Arg(v-u)\not\in A$ for one of these rotations.  Thus we may choose a rotation such that at least one-third of the intracompartmental pairs $u,v$ have $\Arg(v-u) \not\in A$.  Sometimes points lie on the boundaries: such technical issues can be handled naturally using the formalisms that follow.

We define a {\it direction $\zeta$} to be complex number of unit modulus, so the set of directions is the complex unit circle.  A {\it directed line} with direction $\zeta$ is line with parameterization $t\mapsto \zeta t + \eta$ for some $\eta \in \C$, where the parameter $t$ ranges over $\R$.
Every directed line with direction $\zeta$ may be uniquely written as $L(t)=\zeta t + s (-i\zeta)$, where $s$ is a real scalar uniquely determined by the line $L$; we call this the {\it canonical form} of the directed line $L$.
Figure \ref{Grace} shows the relation between the directions $\zeta$ and $-i\zeta$, and the parameterization of the directed line $L(t)$.
Note that $|s|$ is the distance from $L$ to the origin, and the sign of $s$ tells us which side of $L$ the origin lies on.
We may thus identify the set of directed lines with the Cartesian product of the unit circle (for $\zeta$) and the real line (for $s$), and give the directed lines the topology of this product space.
\begin{center}
\begin{figure}[ht!]
\begin{center}
\begin{tikzpicture}[>=triangle 45]
\draw[->,gray] (0,0)--(0.707,0.707);
\draw[->,gray] (0,0)--(0.707,-0.707);
\node (ReL) at (-3.535,0) {};
\node (ReR) at (4.0,0) {$\Re$};
\node (ImB) at (0,-1) {};
\node (ImT) at (0,4.950) {$\Im$};
\draw [->] (ImB)--(ImT);
\draw [->] (ReL)--(ReR);
\node (zero) at (0,0) {};
\fill (zero) circle [radius=2pt];
\node (zerot) at (-0.3,-0.3) {$0$};
\node (zeta) at (0.707,0.707) {};
\fill (zeta) circle [radius=2pt];
\node (zetat) at (1.007,0.707) {$\zeta$};
\node (nizeta) at (0.707,-0.707) {};
\fill (nizeta) circle [radius=2pt];
\node (zetat) at (1.207,-0.707) {$-i \zeta$};
\draw [gray] (0.17675,0.17675)--(0.3535,0)--(0.17675,-0.17676);
\node (L0) at (-1.414,1.414) {};
\fill (L0) circle [radius=2pt];
\node (L0t) at (-2.914,1.414) {$L(0)=s(-i\zeta)$};
\draw [dashed] (0,0)--(-1.414,1.414);
\draw (-1.23725,1.59075)--(-1.0605,1.414)--(-1.23725,1.23725);
\draw [decorate,decoration={brace,amplitude=5pt,raise=4pt},yshift=0pt] (0,0)--(-1.414,1.414) node [black,midway,xshift=-14.0,yshift=-10.0] {$|s|$};
\node (LR) at (2.114,4.950) {$L$};
\node (LL) at (-3.535,-0.707) {};
\node (Lt) at (0.707,3.535) {};
\fill (Lt) circle [radius=2pt];
\node (Ltt) at (2.707,3.535) {$L(t)=\zeta t + s (-i \zeta)$};
\draw[->]  (LL)--(LR);
\draw [decorate,decoration={brace,amplitude=5pt,raise=4pt},yshift=0pt] (-1.414,1.414)--(0.707,3.535) node [black,midway,xshift=-14.0,yshift=10.0] {$|t|$};
\end{tikzpicture}
\caption{The directed line $L(t)=\zeta t + s(-i\zeta)$ (with constants $s \in \R$ and $\zeta \in \C$ with $|\zeta|=1$) has direction $\zeta$ and is distance $|s|$ from the origin.  In this particular instance, $s < 0$, which means that the origin is to the right of a viewer standing on the line and facing in its direction.}\label{Grace}
\end{center}
\end{figure}
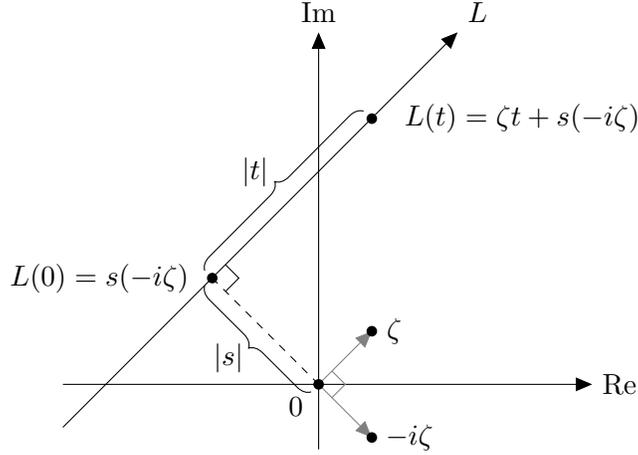
\end{center}

For each direction $\zeta$, we define a total ordering relation $\preceq_\zeta$ on $\C$ with $y \preceq_\zeta z$ if and only if $i\conj{\zeta} y \preceq i\conj{\zeta} z$, where $\preceq$ is our usual lexicographic ordering on $\C$.
Note that $\preceq_i$ is the same as $\preceq$, and for any direction $\zeta$, we have $y \preceq_{-\zeta} z$ if and only if $y \succeq_\zeta z$.
If $L(t)$ is a directed line with direction $\zeta$, then we say $z \in \C$ lies {\it to the left} (resp., {\it to the right}) of $L(t)$ if $z \prec_\zeta L(t)$ for all $t \in \R$ (resp., $z \succ_\zeta L(t)$ for all $t \in \R$).
If $y, z \in \C$ are on the line $L(t)$, say $y=L(t_y)$ and $z=L(t_z)$, then we say that $y$ is {\it below} (resp., {\it above}) $z$ on $L$ if $t_y < t_z$ (resp., $t_y > t_z$).
Note that these concepts of left and right, below and above, all correspond to the usual notions when $L(t)=i t$ is the positively directed $y$-axis, or indeed if $L$ is any translate thereof.

If $\zeta$ is a direction and $V=\{v_1 \prec_\zeta \cdots \prec_\zeta v_n\}$ is a finite set, then the {\it $\zeta$-median of $V$} is $v_{(n+1)/2}$ if $n$ is odd, and is $(v_{n/2}+v_{n/2+1})/2$ if $n$ is even.
If $\zeta$ is a direction and $V$ is a finite set of points, then the {\it $\zeta$-halving line of $V$} is the directed line $L_{V,\zeta}$ with direction $\zeta$ that passes through the $\zeta$-median of $V$.
The line is called a {\it halving line} because $\floor{n/2}$ of the points of $V$ are (with respect to $\preceq_\zeta$) less than the median, and so lie to the left of the line or below the $\zeta$-median of $V$ on the directed line, while $\floor{n/2}$ of the points of $V$ are greater than the median, and so lie to the right of the line or above the median on the directed line.
Our definition of halving line makes a unique halving line for $V$ in each direction, and is identical to the definition used by Erickson, Hurtado, and Morin \cite{Erickson-Hurtado-Morin}.
If $c$ is the $\zeta$-median of $V$, we call any point $z$ with $z \preceq_\zeta c$ {\it formally to the left} of $L_{V,\zeta}$ and any point $z$ with $z \succ_\zeta c$ {\it formally to the right} of $L_{V,\zeta}$.
Thus $\ceil{n/2}$ of the points of $V$ lie formally to the left of $L_{V,\zeta}$ and $\floor{n/2}$ of the points of $V$ lie formally to the right of $L_{V,\zeta}$.

With the topology for the space of directed lines introduced above, we claim that $L_{V,\zeta}$ is continuous in $\zeta$.
Indeed, $L_{V,\zeta}$ evolves by rotating about a particular median for a segment of values of $\zeta$, changing from median $v$ to $w$ precisely for the value of $\zeta$ that makes $v$ and $w$ both lie on the line $L_{V,\zeta}$.
Note that the $\zeta$-median and the $(-\zeta)$-median of $V$ are the same, so that $L_{V,\zeta}$ and $L_{V,-\zeta}$ are the same line, but with opposite directions.
These considerations lead to the following useful observation, first proved in this precise form in \cite[Lemma 3]{Erickson-Hurtado-Morin} (see also \cite[Lemma 2]{Fekete-Meijer} and \cite[Lemma 2]{Dumitrescu-Pach-Toth}).
\begin{lemma}\label{halvingLines}
Let $V$ be a finite subset of $\C$.
Then there is some direction $\zeta$ such that the halving lines of $V$ in directions $\zeta$, $e^{2\pi i/3} \zeta$, and $e^{4\pi i/3}\zeta$ are concurrent.
\end{lemma}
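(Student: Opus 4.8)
The plan is to reduce the statement to a one-variable intermediate value argument that exploits the direction-reversal symmetry of halving lines. Write $\omega=e^{2\pi i/3}$, and for a direction $\zeta$ abbreviate the three candidate halving lines as $L_1(\zeta)=L_{V,\zeta}$, $L_2(\zeta)=L_{V,\omega\zeta}$, $L_3(\zeta)=L_{V,\omega^2\zeta}$. Since the directions $\zeta,\omega\zeta,\omega^2\zeta$ are pairwise at angle $2\pi/3$, no two of these three lines are ever parallel, so any two of them either coincide as sets or meet in a single point. First I would dispose of the degenerate case: if some direction $\zeta$ makes two of $L_1(\zeta),L_2(\zeta),L_3(\zeta)$ coincide as sets, then the third, not being parallel to them, meets their common line, and all three lines are concurrent at that meeting point, so we are done. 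Henceforth assume no two of the three lines coincide for any $\zeta$, so that $p(\zeta):=L_1(\zeta)\cap L_2(\zeta)$ is a well-defined single point.

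Next I would invoke the two properties of halving lines already established in the text: $L_{V,\zeta}$ varies continuously with $\zeta$, and $L_{V,-\zeta}$ is the same line as $L_{V,\zeta}$ but oppositely directed. Because the $\zeta$-median of $V$ lies in the convex hull of $V$, each line $L_{V,\zeta}$ stays in a fixed bounded region, and since $L_1(\zeta)$ and $L_2(\zeta)$ always meet at the fixed angle $2\pi/3$, the point $p(\zeta)$ stays bounded and depends continuously on $\zeta$. Now define a continuous function $G$ on the unit circle of directions by letting $G(\zeta)$ be the signed distance from the directed line $L_3(\zeta)$ to the point $p(\zeta)$, taken positive when $p(\zeta)$ lies to the left of $L_3(\zeta)$ and negative when it lies to the right. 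Then $G(\zeta)=0$ exactly when $p(\zeta)$ lies on $L_3(\zeta)$; since $p(\zeta)$ already lies on $L_1(\zeta)$ and $L_2(\zeta)$, this happens precisely when the halving lines in directions $\zeta,\omega\zeta,\omega^2\zeta$ are concurrent. So it remains to show $G$ has a zero.

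The crux is that $G$ is odd: $G(-\zeta)=-G(\zeta)$. Indeed $L_1(-\zeta)=L_{V,-\zeta}$ and $L_2(-\zeta)=L_{V,-\omega\zeta}$ are, as point sets, the same as $L_1(\zeta)$ and $L_2(\zeta)$ respectively, merely with directions reversed, so $p(-\zeta)=p(\zeta)$. Likewise $L_3(-\zeta)=L_{V,-\omega^2\zeta}$ is the line $L_3(\zeta)$ with its direction reversed, and reversing the direction of a line interchanges its left and right sides. Hence the signed distance from $L_3(-\zeta)$ to $p(-\zeta)=p(\zeta)$ is the negative of the signed distance from $L_3(\zeta)$ to $p(\zeta)$, i.e. $G(-\zeta)=-G(\zeta)$. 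If $G$ never vanished it would have constant sign on the connected unit circle of directions, contradicting $G(-\zeta)=-G(\zeta)$; therefore $G(\zeta_0)=0$ for some $\zeta_0$, and the halving lines of $V$ in directions $\zeta_0$, $e^{2\pi i/3}\zeta_0$, and $e^{4\pi i/3}\zeta_0$ are concurrent.

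I do not anticipate a serious obstacle: the only points needing care are the continuity and boundedness of $p(\zeta)$ (handled by the fixed angle between $L_1(\zeta)$ and $L_2(\zeta)$ together with the medians lying in the convex hull of $V$) and the bookkeeping in the degenerate case of coinciding halving lines. The conceptual heart of the argument — and the step I would be most careful to get exactly right — is identifying the correct symmetry: it is the reversal $\zeta\mapsto-\zeta$, not any rotation of $\zeta$, that swaps "left" and "right" and thereby makes $G$ odd, so that the entire problem collapses to a single application of the intermediate value theorem.
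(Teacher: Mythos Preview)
Your proposal is correct and is essentially the same as the paper's proof: both use the continuity of the halving lines together with the direction-reversal symmetry $\zeta\mapsto-\zeta$ (which preserves each halving line as a set but swaps left and right) to force, via an intermediate-value argument, the intersection of two of the lines to lie on the third. One small remark: your ``degenerate case'' is vacuous, since two lines meeting at angle $2\pi/3$ are never parallel and hence can never coincide as point sets.
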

\begin{proof}
Let $\omega=e^{2\pi i/3}$.
If the point $L_{V,\omega}\cap L_{V,\omega^2}$ is on or to the left (resp., right) of the directed line $L_{V,1}$, then the same point $L_{V,-\omega} \cap L_{V,-\omega^2}$ must be on or to the right (resp., left) of the same but oppositely directed line $L_{V,-1}$.
As $\zeta$ traverses the unit circle, the lines $L_{V,\zeta}$, $L_{V,\omega\zeta}$, and $L_{V,\omega^2\zeta}$ evolve continuously, and so the intersection point $L_{V,\omega\zeta} \cap L_{V,\omega^2\zeta}$ also evolves continuously.
Thus there must be some $\zeta$ such that $L_{V,\omega\zeta} \cap L_{V,\omega^2\zeta}$ crosses $L_{V,\zeta}$.
\end{proof}
Now we are ready to recall and prove our improved bound (Theorem \ref{Katherine}) on $\SEn$.
\begin{theorem}
If $\cE$ is the pattern of the vertices of equilateral triangles in $\R^2$, then $\SEn \leq \floor{(4 n-1)(n-1)/18}$.
\end{theorem}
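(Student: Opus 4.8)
The plan is to sharpen the bound $\SEn\le\floor{(n-1)^2/4}$ that Theorem \ref{Beatrice} already supplies (using that $\cE$ admits at most one reconstruction from $\preceq$-consecutive points) by exploiting Lemma \ref{Jacqueline}: a pair $\{u,v\}$ with $u\prec v$ is the first two — equivalently, the last two — vertices of an equilateral triangle only when $\Arg(v-u)\in A:=(-5\pi/6,-\pi/6]\cup(\pi/6,5\pi/6]$, so a pair whose connecting segment is within $\pi/6$ of horizontal is \emph{unproductive} (admits no such reconstruction) and must be deducted from the count, the remaining pairs being \emph{productive}. More generally, for a direction $\zeta$ the same applies with $\preceq$ replaced by $\preceq_\zeta$ and the condition $\Arg(i\conj{\zeta}(v-u))\in A$; a pair is unproductive for $\preceq_\zeta$ precisely when the undirected direction of its segment lies within $\pi/6$ of the direction perpendicular to the halving line $L_{V,\zeta}$. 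The idea is to orient $V$ so that many pairs lie, together, on one side of a halving line while being unproductive.

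Fix an $n$-set $V$. By Lemma \ref{halvingLines} choose $\zeta$ so that the halving lines of $V$ in directions $\zeta$, $e^{2\pi i/3}\zeta$, $e^{4\pi i/3}\zeta$ are concurrent; translating and rotating $V$, assume $\zeta=i$ (so that $\preceq_\zeta$ is $\preceq$), that the three lines pass through the origin, and that they are the $y$-axis and $y=\pm x/\sqrt3$. These three lines, pairwise separated by $\pi/3$, partition $\C\smallsetminus\{0\}$ into six open sectors $C_1,\dots,C_6$; points of $V$ on a line (or at the origin) are assigned to sectors via the ``formally left/right'' conventions of the previous section. Call $\{u,v\}\subseteq V$ \emph{intracompartmental} if $u$ and $v$ share a sector, and put $n_j=\card{V\cap C_j}$. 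Because each sector lies wholly to one side of each of the three lines, an intracompartmental pair is ``together'' (both formally left, or both formally right) in each of the three balanced $\preceq_{\zeta'}$-orderly $2$-decompositions of $V$ coming from the three halving lines, $\zeta'\in\{\zeta,e^{2\pi i/3}\zeta,e^{4\pi i/3}\zeta\}$. Moreover the three ``unproductive windows'' of undirected segment-directions — each an interval of length $\pi/3$, centered at the direction perpendicular to one of the three lines — tile the circle of undirected directions, so each intracompartmental pair is unproductive in exactly one of these three orderings.

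Now run the counting of Theorem \ref{Beatrice} in each of the three orderings. For a fixed one, write its decomposition as $\{V_1\prec V_2\}$ with $\card{V_1}=\ceil{n/2}$ and $\card{V_2}=\floor{n/2}$. By Lemma \ref{Alice} each equilateral triangle in $V$ is of echelon $1$ or $2$; an echelon-$1$ copy has its two least vertices forming a productive pair in $V_1$, distinct copies giving distinct pairs (as $\cE$ has at most one reconstruction), and symmetrically echelon-$2$ copies inject into productive pairs of $V_2$, so $\SEV\le\binom{\card{V_1}}{2}+\binom{\card{V_2}}{2}-U$, where $U$ is the number of unproductive pairs lying within $V_1$ or within $V_2$; in particular $U$ is at least the number of intracompartmental pairs that are unproductive in this ordering. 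Summing over the three orderings, these lower bounds for $U$ total $\sum_j\binom{n_j}{2}$, so for some ordering $U\ge\bigl\lceil\tfrac13\sum_j\binom{n_j}{2}\bigr\rceil$. Since $\sum_j n_j=n$, writing $n=6s+t$ with $0\le t<6$ gives $\sum_j\binom{n_j}{2}\ge t\binom{s+1}{2}+(6-t)\binom{s}{2}$, and substituting this into $\binom{\ceil{n/2}}{2}+\binom{\floor{n/2}}{2}-\bigl\lceil\tfrac13\bigl(t\binom{s+1}{2}+(6-t)\binom{s}{2}\bigr)\bigr\rceil$ and simplifying over the residue classes of $n$ modulo $6$ shows this is at most $\floor{(4n-1)(n-1)/18}$.

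I expect the main obstacle to be the clean interface between the topological ingredient (Lemma \ref{halvingLines}) and the combinatorial count: one must verify that an intracompartmental pair genuinely remains ``together'' in all three decompositions and is charged as unproductive in exactly one of them, and in particular that the degenerate cases — points of $V$ on a halving line, ties under some $\preceq_{\zeta'}$, or the concurrency point lying in $V$ — are absorbed correctly by the ``formally left/right'' formalism. The concluding inequality over the six residues modulo $6$ is routine arithmetic (the method in fact yields a slightly stronger bound for some residues, but the closed form $\floor{(4n-1)(n-1)/18}$ holds uniformly).
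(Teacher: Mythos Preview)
Your approach is essentially identical to the paper's: three concurrent halving lines via Lemma \ref{halvingLines}, six sectorial compartments, the observation that each intracompartmental pair is unproductive for exactly one of the three orderings, averaging to deduct at least one third of $\sum_j\binom{n_j}{2}$, and a convexity estimate over residues modulo~$6$.

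There is one genuine slip. You assert that the ``formally left/right'' conventions assign every point of $V$ to one of the six sectors, so that $\sum_j n_j=n$. This fails precisely at the concurrency point: if a point of $V$ lies at the origin, it sits on all three halving lines, and nothing prevents it from being formally left of all three (e.g.\ when it is the median for each direction) or formally right of all three. Such a point matches none of the six left/right patterns defining $V_1,\dots,V_6$. The paper handles this by introducing a seventh compartment $V_7$ (containing at most one point) and therefore uses $\sum_{j=1}^6|V_j|\ge n-1$, not $n$, in the convexity bound; the parametrisation $n-1=6q+2r+s$ then yields the stated closed form $\lfloor(4n-1)(n-1)/18\rfloor$. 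Your convexity step with $\sum_j n_j=n$ would, in the presence of such a point, over-deduct: for instance when $n=7$ and the concurrency point lies in $V$ with the remaining six points in distinct sectors, there are no intracompartmental pairs at all, so the deduction must be $0$, not the $\lceil 1/3\rceil=1$ your formula gives. You flagged this degenerate case as a potential obstacle but did not resolve it; the fix is exactly the paper's: allow for a singleton $V_7$ and run the arithmetic with $n-1$ in place of $n$.
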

\begin{proof}
Let $V$ be an $n$-subset of $\R^2$ (identified with $\C$).
Obtain three concurrent halving lines of $V$, say $L$, $M$, and $N$, as described in Lemma \ref{halvingLines}, where the directions of $M$ and $N$ are obtained from that of $L$ by rotating by $2\pi/3$ and $4\pi/3$, respectively.
This means that a point can only be formally left (or formally right) of all three lines if it lies on all three.
So we may decompose $V$ into six or seven disjoint classes of points (called {\it compartments}), where a point is classified according to its position (formally left or right) relative to the three lines.
\begin{center}
\begin{tabular}{cccc}
      & \multicolumn{3}{c}{Position Relative to Line} \\
Compartment & $L$ & $M$ & $N$ \\
\hline
\hline
$V_1$ & formally left  & formally right & formally left \\
$V_2$ & formally left  & formally right & formally right \\
$V_3$ & formally left  & formally left  & formally right \\
\hline
$V_4$ & formally right & formally left  & formally right \\
$V_5$ & formally right & formally left  & formally left \\
$V_6$ & formally right & formally right & formally left \\
\hline
\multirow{2}{*}{$V_7$} & on & on & on \\
 & \multicolumn{3}{c}{(if this point is not in $V_1 \cup \cdots \cup V_6$)}
\end{tabular}
\end{center}
Note that $V_7$ is nonempty if and only if a point of $V$ happens to lie on the intersection of the three lines and if said point happens to be formally left of all three lines or formally right of all three lines.  We depict our compartments in Figure \ref{Harold}.
\begin{center}
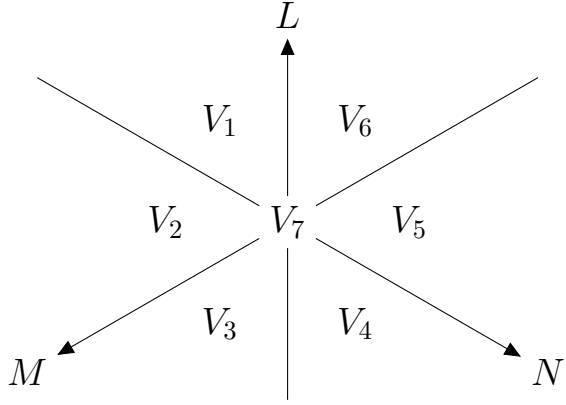
\begin{figure}[ht!]
\begin{center}
\begin{tikzpicture}[>=triangle 45]
\node (HL) at (0,2.75) {\Large{$L$}};
\node (TL) at (0,-2.5) {};
\node (HM) at (-3.464,-2) {\Large{$M$}};
\node (TM) at ( 3.464, 2) {};
\node (HN) at ( 3.464,-2) {\Large{$N$}};
\node (TN) at (-3.464, 2) {};
\node (V1) at (-0.9,1.35) {\Large{$V_1$}};
\node (V2) at (-1.62,0.0) {\Large{$V_2$}};
\node (V3) at (-0.9,-1.35) {\Large{$V_3$}};
\node (V4) at (0.9,-1.35) {\Large{$V_4$}};
\node (V5) at (1.62,0.0) {\Large{$V_5$}};
\node (V6) at (0.9,1.35) {\Large{$V_6$}};
\node (V7) at (0,0) {\Large{$V_7$}};
\draw (TL)--(V7);
\draw (TM)--(V7);
\draw (TN)--(V7);
\draw[->]  (V7)--(HL);
\draw[->]  (V7)--(HM);
\draw[->]  (V7)--(HN);
\end{tikzpicture}
\caption{The seven possible compartments, where $V_7$ can only contain the intersection point of the three lines}\label{Harold}
\end{center}
\end{figure}
\end{center}

Let $A=(-5\pi/6,-\pi/6] \cup (\pi/6,5\pi/6]$, the set of values of $\Arg(v-u)$ that (by Lemma \ref{Jacqueline}) admit a reconstruction of $\cE$ with $u$ and $v$ as consecutive points.
We call pairs of points in $\bigcup_{j=1}^6 V_j\times V_j$ {\it intracompartmental pairs}.
We can rotate $V$ and the directed lines $L$, $M$, and $N$ so that one of them is $t\mapsto i t$, the positively directed $y$-axis.
Each intracompartmental pair $(u,v)$ will have $\Arg(v-u) \in A$ for two of these rotations, and $\Arg(v-u)\not\in A$ for one of these rotations. 
Thus we may choose a rotation such that at least one-third of the intracompartmental pairs $u,v$ have $\Arg(v-u) \not\in A$.
Without loss of generality, let us say that our chosen rotation makes the line $L$ the positively directed $y$-axis.

Every instance $\{v_1 \prec v_2 \prec v_3\}$ of $\cE$ in $V$ either has $v_1$ and $v_2$ formally to the left of $L$  or has $v_2$ and $v_3$ formally to the right of $L$.
Furthermore, Lemma \ref{Jacqueline} shows that in the former case $\Arg(v_2-v_1) \in A$ and $v_3$ is uniquely determined by $v_1$ and $v_2$, while in the latter case $\Arg(v_3-v_2) \in A$ and $v_1$ is uniquely determined by $v_2$ and $v_3$.
Since $L$ is a halving line, there are $\ceil{n/2}$ points formally to the left of $L$ and $\floor{n/2}$ points formally to the right of $L$.
So we count the number of pairs $(u,v)$ in $V$ with $u$ and $v$ formally to the same side of $L$, and deduct the number of intracompartmental pairs that we deliberately rotated to prevent them from admitting a reconstruction of $\cE$ to obtain
\begin{equation}\label{Terence}
\SEV \leq \binom{\ceil{n/2}}{2}+\binom{\floor{n/2}}{2}-\frac{1}{3} \sum_{j=1}^{6} \binom{\card{V_j}}{2}.
\end{equation}
Now $\sum_{j=1}^6 \card{V_j}$ is either $n$ or $n-1$ (depending on whether $V_7$ is empty or not), and by convexity, the sum of binomial coefficients in \eqref{Terence} is minimized when the various values $\card{V_j}$ are as close to equal as possible.
So if $n-1=6 q+2 r+s$ with $q \in \Z$, $r \in \{0,1,2\}$ and $s \in \{0,1\}$, then
\begin{align}
\sum_{j=1}^{6} \binom{\card{V_j}}{2}
& \geq (2 r+s) \binom{q+1}{2} + (6-2 r -s)\binom{q}{2} \label{Helen} \\
& = q (3 q+ 2 r + s-3).\nonumber
\end{align}
In this case, we see that the first two terms of \eqref{Terence} become
\begin{align}
\binom{\ceil{n/2}}{2}+\binom{\floor{n/2}}{2}
& =  (s+1) \binom{3 q+r+1}{2} + (1-s) \binom{3 q + r}{2} \label{Peter} \\
& = (3 q+r)(3 q+r +s), \nonumber 
\end{align}
and substituting \eqref{Helen} and \eqref{Peter} into \eqref{Terence}, we obtain
\[
\SEV \leq \frac{8 q(q+2 r+s) + 3 q + 3 r(r+s)}{3},
\]
which works out to
\[
\SEV \leq \begin{cases}
\frac{2}{9} n^2 - \frac{5}{18} n - \frac{1}{3} & \text{if $n\equiv 0$ or $2 \pmod{6}$,} \\
\frac{2}{9} n^2 - \frac{5}{18} n + \frac{1}{18} & \text{if $n\equiv 1 \pmod{6}$,} \\
\frac{2}{9} n^2 - \frac{5}{18} n - \frac{1}{6} & \text{if $n\equiv 3$ or $5 \pmod{6}$,} \\
\frac{2}{9} n^2 - \frac{5}{18} n - \frac{4}{9} & \text{if $n\equiv 4 \pmod{6}$,}
\end{cases}
\]
so that $\SEV \leq \floor{(4 n-1)(n-1)/18}$ for arbitrary $n$.  Since $V$ was (up to our rotation) an arbitrary $n$-subset of $\C$, this upper bound holds for $\SEn$.
\end{proof}

\end{document}